\documentclass[11pt]{amsart}
\usepackage[margin=1.15in]{geometry}
\usepackage{amsmath,amssymb,amsthm}
\usepackage{booktabs}
\usepackage{xcolor}
\usepackage[colorlinks=true,linkcolor=blue!60!black,citecolor=blue!60!black,urlcolor=blue!60!black]{hyperref}

\newtheorem{theorem}{Theorem}[section]
\newtheorem{lemma}[theorem]{Lemma}
\newtheorem{proposition}[theorem]{Proposition}

\theoremstyle{definition}
\newtheorem{definition}[theorem]{Definition}
\theoremstyle{remark}
\newtheorem{remark}[theorem]{Remark}

\newcommand{\Rea}{\operatorname{Re}}
\newcommand{\Ima}{\operatorname{Im}}
\newcommand{\qbin}[2]{\genfrac{[}{]}{0pt}{}{#1}{#2}}

\title[An explicit cubic wedge for Toeplitz minors of the $\xi$-coefficients]{An explicit uniform cubic wedge for consecutive Toeplitz minors of the Riemann $\xi$-coefficients}

\author{Wojciech Micha{\l}owski}
\email{michalowski.wojciech1@gmail.com}

\date{18 July 2026}

\begin{document}

\begin{abstract}
Let $\Xi(x)=\tfrac18\,\xi\!\left(\tfrac12+\tfrac{ix}{2}\right)
=\sum_{k\ge0}(-1)^k a_k x^{2k}$ be the normalized Riemann $\xi$-function,
so that $a_k=\frac1{(2k)!}\int_0^\infty u^{2k}\Phi(u)\,du>0$ with the
classical kernel $\Phi$, and let
$D_{r,k}=\det\,[a_{k+j-i}]_{i,j=0}^{r-1}$ denote the consecutive Toeplitz
minors of the coefficient sequence.  The Riemann Hypothesis is equivalent
to $(a_k)$ being a P\'olya frequency sequence of infinite order, and hence
to nonnegativity of all Toeplitz minors.  We prove the explicit,
uniform-in-$r$ theorem
\[
D_{r,k}>0
\qquad\text{for every } r\ge2 \text{ and every } k\ge10^{18}\,r^{3}.
\]
The result gives an explicit cubic tail scale and is uniform in the order
$r$, in contrast with the asymptotic fixed-order positivity of Katkova.
The proof does not use any verified numerical zeros of $\zeta$.  It combines
(A)~a certified complex saddle-point analysis of the moment transform
$\int_0^\infty u^{2z}\Phi(u)\,du$, giving a zero-free relative disk and the
uniform all-degree coefficient bound
$\bigl|f^{(d)}(k)/d!\bigr|\le3\cdot40^{d}k^{1-d}$ ($d\ge3$) for
$f=\log\bigl(\int u^{2z}\Phi/\Gamma(2z+1)\bigr)$;
(B)~an exact $q$-Pascal dilation semigroup which bounds the
signature-whitened response of every degree simultaneously; and
(C)~a weighted Banach-algebra majorant for the nonlinear remainder,
closed by an inertia-preservation argument for an explicit indefinite
comparison model.  All analytic constants are certified in Arb ball
arithmetic with directed rounding, and all algebraic identities are
verified in exact rational arithmetic; ancillary files reproduce every
certificate.  The result is a statement about the tail regime
$k\gg r^{3}$ only; it makes no progress on the Riemann Hypothesis, which
concerns the complementary region.
\end{abstract}

\maketitle

\section{Introduction}\label{sec:intro}

\subsection{The coefficient reformulation}
Write $\xi(s)=\tfrac12 s(s-1)\pi^{-s/2}\Gamma(s/2)\zeta(s)$ and let
\begin{equation}\label{eq:Phi}
\Phi(u)=\sum_{n\ge1}\bigl(2\pi^2n^4e^{9u}-3\pi n^2e^{5u}\bigr)
e^{-\pi n^2e^{4u}},\qquad u\ge0,
\end{equation}
be the classical Jacobi-theta kernel, so that
$\tfrac18\,\xi\!\left(\tfrac12+\tfrac{ix}2\right)
=\int_0^\infty\Phi(u)\cos(xu)\,du$.
Each summand of \eqref{eq:Phi} equals
$\pi n^2e^{5u}(2\pi n^2e^{4u}-3)e^{-\pi n^2e^{4u}}>0$, so $\Phi>0$.
The Maclaurin coefficients
\begin{equation}\label{eq:ak}
a_k=\frac1{(2k)!}\,m_{2k},\qquad
m_{2k}=\int_0^\infty u^{2k}\Phi(u)\,du,
\end{equation}
are positive, and $G(z):=\sum_{k\ge0}a_kz^k=\tfrac18\,
\xi\!\left(\tfrac12+\tfrac{\sqrt z}2\right)$ is entire of order
$\tfrac12$, hence of genus zero.

By the Aissen--Schoenberg--Whitney--Edrei characterization of
P\'olya frequency sequences \cite{ASW,Edrei} combined with the genus-zero
factorization, the Riemann Hypothesis is \emph{equivalent} to the
statement that $(a_k)_{k\ge0}$ is a P\'olya frequency sequence of
infinite order, i.e.\ that every minor of the one-sided Toeplitz matrix
$[a_{j-i}]$ is nonnegative.  We study the \emph{consecutive} minors
\begin{equation}\label{eq:Drk}
D_{r,k}:=\det\,[a_{k+j-i}]_{i,j=0}^{r-1},
\qquad r\ge1,\ k\ge0 .
\end{equation}
Here and throughout, $a_\ell=0$ for $\ell<0$, the standard one-sided
Toeplitz convention.
Strict positivity of all $D_{r,k}$ implies $\mathrm{PF}_\infty$ by the
one-sided strict consecutive-minor criterion (Katkova
\cite[Theorem~D]{Katkova}); a single certified negative $D_{r,k}$ would
disprove RH.  The consecutive minors are therefore the natural
finite-dimensional observables of the problem.  (In the heat-flow
deformation of this picture the de Bruijn--Newman constant is
non-negative \cite{RT}, so the $t=0$ sequence studied here is the
boundary case of the deformed family.)

\subsection{Known positivity regions}
Three families of results are known.  Csordas, Norfolk and Varga
\cite{CNV} proved the Tur\'an inequalities, which in the normalization
\eqref{eq:ak} read $a_k^2>\frac{k+1}{k}a_{k-1}a_{k+1}$; in particular
$D_{2,k}>0$ for all $k\ge1$.  Katkova \cite{Katkova} proved that for
every fixed order $m$ there is an $N(m)$ with $D_{m,k}>0$ for all
$k\ge N(m)$; her argument is asymptotic and yields no explicit or uniform
$N(m)$.  Finally, Schoenberg's sector theorem \cite{Schoenberg} (an
entire function of genus $0$ with positive coefficients whose zeros avoid
the sector $|\arg w|<\pi m/(m+1)$ generates a $\mathrm{PF}_m$ sequence)
combined with the verified height
$H=3{,}000{,}175{,}332{,}800$ of Platt and Trudgian \cite{PT} yields
$D_{r,k}\ge0$ for all $k$ whenever $r\le\lfloor\pi H\rfloor-1
\approx9.4\cdot10^{12}$.  Indeed, a zero
$\rho=\beta+i\gamma$ of $\xi$ corresponds to
$w=4(\rho-\tfrac12)^2$, a zero of $G$.  For $|\gamma|\le H$ the
verification gives $\beta=\tfrac12$, so $w<0$.  For $|\gamma|>H$,
using $|\beta-\tfrac12|<\tfrac12$,
\[
 \pi-|\arg w|
 =2\arctan\frac{|\beta-\tfrac12|}{|\gamma|}
 <\frac1H.
\]
Thus every zero avoids $|\arg w|<\pi m/(m+1)$ whenever
$m+1\le\pi H$.  This route consumes the numerical verification of zeros
and cannot reach orders beyond the verified height.

In the related Jensen-polynomial hierarchy, hyperbolicity of almost all
shifted Jensen polynomials is known \cite{GORZ}; we do not use
Jensen-polynomial methods, and Theorem~\ref{thm:main} neither follows
from nor implies such hyperbolicity results.

The present paper proves an explicit wedge which is uniform in the order
and independent of all zero verification.

\begin{theorem}\label{thm:main}
For every integer $r\ge2$ and every integer $k\ge10^{18}\,r^{3}$,
\[
D_{r,k}>0 .
\]
\end{theorem}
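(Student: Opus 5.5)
We will follow the three-part architecture announced in the abstract, beginning by writing the coefficients in log-concave exponential form. Set $F(z)=\int_0^\infty u^{2z}\Phi(u)\,du$ and $f(z)=\log\bigl(F(z)/\Gamma(2z+1)\bigr)$, so that $a_\ell=e^{f(\ell)}$ for integer $\ell$. Then
\[
D_{r,k}=\det\bigl[e^{f(k+j-i)}\bigr]_{i,j=0}^{r-1}.
\]
Expand $f(k+t)=f(k)+f'(k)t+\tfrac12 f''(k)t^2+R_k(t)$ with $t=j-i\in[-(r-1),r-1]$. The constant and linear terms factor out of the determinant as diagonal scalings $e^{f'(k)(j-i)}$, which leave the determinant's sign unchanged up to the positive factor $e^{rf(k)}$. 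After this reduction the matrix becomes $[q^{-(j-i)^2/2}e^{R_k(j-i)}]$ with $q=e^{-f''(k)}>1$; here $f''(k)\asymp -1/k$ by the saddle-point asymptotics, so $\log q\asymp 1/k$.

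The quadratic model $[q^{-(j-i)^2/2}]$ factors explicitly. Writing $q^{-(j-i)^2/2}=q^{-i^2/2}q^{ij}q^{-j^2/2}$, its determinant reduces to a Vandermonde in $q^{i}$, giving
\[
\det = q^{-\sum i^2}\prod_{i<j}(q^j-q^i),
\]
which is positive with an explicit $q$-Pascal (Gaussian binomial) LU factorization. This is step (B). We would use that factorization to "whiten" the perturbation. Concretely, we write the full matrix as $M_0\circ E$, where $E_{ij}=e^{R_k(j-i)}$ is a Hadamard perturbation, and conjugate by the $q$-Pascal triangular factors $L,U$ of $M_0$. The whitened matrix is then $I+N$, and it suffices to show that $I+N$ has positive determinant, for instance via $\|N\|<1$ in a suitable weighted norm.

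Step (A) supplies the input needed to control $N$. On a zero-free disk of relative radius about $k/40$, the saddle-point bound gives
\[
\bigl|f^{(d)}(k)/d!\bigr|\le 3\cdot 40^d k^{1-d}\qquad (d\ge 3).
\]
Hence $R_k(t)=\sum_{d\ge3}c_d t^d$ with $|c_d t^d|\le 3(40r/k)^d k$, so $|R_k|\lesssim r^3/k^2$ for $|t|\le r$. The Hadamard factor $e^{R_k}-1$ is therefore entrywise of size about $r^3/k^2$, and its degree-$d$ monomial pieces $t^d=(j-i)^d$ can be expanded binomially into rank-limited structures. The key step is to bound the whitened response of each monomial $(j-i)^d$ under the $q$-Pascal dilations uniformly in $d$. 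Dividing by $q$-differences costs factors of order $(\log q)^{-1}\sim k$ per degree, so a naive estimate loses $k^{d/2}$. Step (B) is meant to show that the dilation semigroup structure limits this loss to roughly $(C r/\sqrt{k})^{d}$ times a factor, and this bound is what forces the scale $k\gg r^3$.

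Step (C) closes the argument. We place the coefficient sequences $(c_d)$ in a weighted $\ell^1$ Banach algebra with weight $\rho^d$, where $\rho=C r/k^{1/3}$ or similar, so that the exponential $e^{R_k}$ and all products arising in the whitened expansion are majorized by a convergent series whose total is at most $10^{-1}$ once $k\ge10^{18}r^3$. The constant $10^{18}$ absorbs the factors $40^d$ together with the Banach-algebra and whitening constants.

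I expect the main obstacle to be this whitening estimate. The $q$-Pascal factors are not uniformly well-conditioned, and the naive operator norm of $L^{-1}$ grows exponentially in $r$. My first attempt would be to avoid full positive-definiteness and instead track inertia. We choose an explicit indefinite comparison model, namely the quadratic model signature-whitened by $\operatorname{diag}(\pm1)$, and show that the homotopy $M_0\circ E^{s}$, $s\in[0,1]$, never becomes singular because $\|N_s\|<1$ in the weighted norm. Then $\det$ keeps the sign of $\det M_0>0$ along the path, giving $D_{r,k}>0$. All numerical constants (the saddle-point radius, the constant $3$, the contraction bound) would be certified in interval arithmetic.
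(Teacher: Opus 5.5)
You have the paper's skeleton: a zero-free saddle disk giving $|f^{(d)}(k)/d!|\le3\cdot40^dk^{1-d}$, a quadratic $q$-Vandermonde model, whitening by its $q$-Pascal factor, and a weighted $\ell^1$ majorant. Your Toeplitz variant is also viable in principle. With $q=e^{-f''(k)}>1$ the model $[q^{-(j-i)^2/2}]=\operatorname{diag}(q^{-i^2/2})\,[q^{ij}]\,\operatorname{diag}(q^{-j^2/2})$ is positive definite: its $LDL^{T}$ pivots $\prod_{h<m}(q^m-q^h)$ are all positive. So once you know $\|N\|_2<1$ you get $\det(I+N)>0$ directly, with no inertia or sign bookkeeping.

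The gap is that the step you flag as the main obstacle is the heart of the theorem, and the proposal does not supply it. Entrywise smallness $|e^{R_k}-1|\lesssim r^3/k^2$ is useless against the exponential ill-conditioning of $L$. Your proposed remedy does not address this, for two reasons. First, your own model has no signature to track; the paper's indefinite model appears only after reversing columns to the Hankel block $a_{k+(r-1)-i-j}$, where $V=[q^{ij}]$ has $q<1$ and alternating pivots. Second, any inertia or homotopy argument takes $\|S^{-1}ES^{-T}\|_2<1$ as its \emph{hypothesis} rather than proving it.

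What is missing is the exact dilation identity of Gate~B. Your binomial expansion of $(j-i)^d$ into diagonal scalings of $V$ points the right way, but everything hinges on the norm of the conjugated diagonal $S^{-1}\operatorname{diag}(i)S$.
\begin{itemize}
\item Because the source is additively separable, $V\circ e^{\tau\alpha(j-i)}=\operatorname{diag}(q^{-\alpha i})\,V\,\operatorname{diag}(q^{\alpha j})$. The paper does the same with $s=(r-1)-i-j$.
\item After whitening this becomes exactly $e^{-\alpha G}e^{\alpha G^{T}}$, where $e^{\alpha G}=S^{-1}\operatorname{diag}(q^{\alpha i})S$.
\item $S^{-1}\operatorname{diag}(q^{i})S$ is \emph{exactly lower bidiagonal}, with diagonal $q^m$ and subdiagonal of modulus $\sqrt{q^{m-1}|1-q^m|}\lesssim\sqrt{m\tau}$. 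Hence $\|G\|\le\tfrac32\sqrt{r\tau}$ for small $r\tau$.
\item Differentiating $n$ times at $\alpha=0$ gives $\mathcal R(t^n)\le(2\|G\|/\tau)^n\le(3\sqrt{r/\tau})^n$ for every $n$. This is precisely the input the Banach-algebra functional calculus needs.
\end{itemize}

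This also corrects your scalings.
\begin{itemize}
\item The weight must be $w\asymp\sqrt{r/\tau}\asymp\sqrt{rk}\gg1$. Already the whitened matrix of the source $j-i$ has entries of size $\sqrt{m/\tau}$, so a weight $\rho=Cr/k^{1/3}<1$ cannot dominate the response.
\item The degree-$d$ contribution is $|c_d|\,w^d\lesssim k\,(C\sqrt{r/k})^{d}$. Its $d=3$ term $\asymp\sqrt{r^3/k}$ is what yields the cubic scale. A per-degree factor $(Cr/\sqrt k)^d$ against the prefactor $k$ in $|c_d|$ would instead force $k\gg r^6$.
\end{itemize}

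Finally, centering at $\tau=-f''(k)$ requires a certified two-sided window $-f''(k)\asymp1/k$, both for the weight and to keep $r\tau$ small. You assert this but do not derive it, and the $d\ge3$ bound does not give it. The paper instead uses the discrete $\tau_k=-\log(a_{k-1}a_{k+1}/a_k^2)$, bounded by Cauchy--Schwarz and the Csordas--Norfolk--Varga Tur\'an inequality.
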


The constant $10^{18}$ is deliberately generous; no attempt was made to
optimize it.  The content of the theorem is the explicit cubic tail scale
together with uniformity in $r$.  In the proof below the third-order
centered logarithmic remainder and the signature-whitened response combine
through the dimensionless quantity $r^3/k$; we make no claim that the
constant or the exponent is optimal.

\begin{remark}[Scope]\label{rem:scope}
Theorem~\ref{thm:main} is a statement about the tail regime
$k\ge10^{18}r^3$ and makes \emph{no} progress on the Riemann Hypothesis.
The RH-critical regime is $k\sim r$ (indeed $\mathrm{PF}_\infty$ requires
all minors, and the difficulty concentrates at bounded $k/r$); the region
$k<10^{18}r^3$ remains completely open.  We state this prominently
because experience shows that partial positivity results in this area are
easily over-read.
\end{remark}

\subsection{Method and certification}
The proof has three independent gates and one assembly step.

\emph{Gate A} (\S\ref{sec:gateA}).  Let
$I(z)=\int_0^\infty u^{2z}\Phi(u)\,du$, $a(z)=I(z)/\Gamma(2z+1)$ (so
$a(k)=a_k$) and $f=\log a$.  We construct a certified steepest-descent
analysis of $I(z)$, uniform on the relative disks $|z-k|\le0.05k$ for
every integer $k\ge10^9$: the integrand's saddle continues analytically,
the deformed contour is dominated by an explicit complex-Gaussian core,
and $I(z)$ is zero-free on the disk.  A second-derivative Cauchy argument
(which avoids the spurious $\log k$ of a naive Cauchy estimate on $f$)
then yields the uniform all-degree bound
\begin{equation}\label{eq:cd-intro}
\Bigl|\frac{f^{(d)}(k)}{d!}\Bigr|\le3\cdot40^{d}\,k^{1-d},
\qquad d\ge3,\ k\ge10^9 .
\end{equation}

\emph{Gate B} (\S\ref{sec:gateB}).  The local comparison model for the
normalized coefficients is the log-quadratic sequence
$c_s=q_k^{s(s-1)/2}$ with $q_k=a_{k-1}a_{k+1}/a_k^2=e^{-\tau_k}$.  Its
reversed Hankel block is an explicit diagonal congruence of the
$q$-Vandermonde matrix $V_{ij}=q_k^{ij}$, whose exact $LDL^{T}$
factorization has $q$-binomial $L$ and pivots of alternating sign.  The
one-step dilation $R_1=L^{-1}\operatorname{diag}(q^i)L$ is exactly lower
bidiagonal, and extends to a one-parameter group whose whitened generator
has norm at most $4\sqrt{r\tau_k}$.  This bounds the signature-whitened
response of \emph{every} polynomial degree simultaneously.

\emph{Gate C} (\S\ref{sec:gateC}).  The true coefficients differ from the
model by $e^{h_s}$ with an explicit centered remainder $h$.  A weighted
$\ell^1$ Banach algebra turns \eqref{eq:cd-intro} and the Gate~B response
weights into the certified bound
$\|e^{h}-1\|\le e^{0.1310721}-1<0.14005<1$ for $k\ge10^{18}r^3$.

\emph{Assembly} (\S\ref{sec:assembly}).  An inertia-preservation lemma
shows that a whitened perturbation of norm $<1$ cannot change the
signature of the indefinite comparison model; an exact sign computation
(reversal sign $(-1)^{r(r-1)/2}$ against model inertia
$(\lceil r/2\rceil,\lfloor r/2\rfloor)$) then gives $D_{r,k}>0$.

The paper is computer-assisted in the following precise sense.  Every
inequality between explicit real constants is certified in Arb ball
arithmetic \cite{Johansson} (via \texttt{python-flint}) with directed
rounding, and every algebraic identity (the $q$-Pascal factorizations,
the bidiagonal dilation, the semigroup law) is verified in exact rational
arithmetic.  The ancillary files contain the complete certificate code
and a regression suite; Table~\ref{tab:certificates} maps each lemma to
its certificate.  No numerical zeros of $\zeta$, no floating-point
heuristics, and no unverified global optimization enter the proof.

The author's companion note \cite{PF5} contains a certified
\emph{negative} result of the same flavour (failure of order-five total
positivity for the continuous de Bruijn--Newman kernel); the two results
are logically independent, since coefficient positivity
\eqref{eq:Drk} and kernel total positivity are distinct properties.

\section{The curvature window}\label{sec:tau}

Throughout, $q_k=a_{k-1}a_{k+1}/a_k^2$ and $\tau_k=-\log q_k$.
Positivity of $\Phi$ makes $a_k>0$, so $q_k>0$.

\begin{lemma}[Curvature window]\label{lem:tau}
For every $k\ge2$,
\[
\frac1{2k}<\tau_k<\frac4k .
\]
\end{lemma}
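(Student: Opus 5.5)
The two inequalities come from two different sources. The lower bound $\tau_k>1/(2k)$ follows from the Csordas--Norfolk--Varga Tur\'an inequalities quoted in the introduction. The upper bound $\tau_k<4/k$ follows from Cauchy--Schwarz applied to the moments $m_{2k}$, combined with the explicit factorial factors in \eqref{eq:ak}. Both parts reduce to elementary logarithm estimates, and no saddle-point analysis is needed here.

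\emph{Lower bound.} In the normalization \eqref{eq:ak}, the CNV inequality reads $a_k^2>\frac{k+1}{k}a_{k-1}a_{k+1}$ for $k\ge1$. Hence $q_k<k/(k+1)$, so
\[
\tau_k>\log\Bigl(1+\frac1k\Bigr).
\]
Using $\log(1+x)>x/(1+x)$ for $x>0$, this gives $\tau_k>1/(k+1)$. Finally $1/(k+1)\ge 1/(2k)$ for $k\ge1$, with strict inequality for $k\ge2$. This step simply invokes \cite{CNV}. One should double-check that its normalization matches \eqref{eq:ak}; the introduction asserts this explicitly.

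\emph{Upper bound.} First separate the moment part from the factorial part. Write
\[
q_k=\frac{m_{2k-2}\,m_{2k+2}}{m_{2k}^2}\cdot\frac{(2k)!^2}{(2k-2)!\,(2k+2)!}.
\]
Since $\Phi>0$, Cauchy--Schwarz applied to $u^{k-1}\sqrt\Phi$ and $u^{k+1}\sqrt\Phi$ gives $m_{2k}^2\le m_{2k-2}m_{2k+2}$. Therefore the moment ratio is at least $1$, and
\[
q_k\ge\frac{2k(2k-1)}{(2k+1)(2k+2)}.
\]
Taking logarithms,
\[
\tau_k\le\log\Bigl(1+\frac1k\Bigr)+\log\Bigl(1+\frac{2}{2k-1}\Bigr)<\frac1k+\frac{2}{2k-1}.
\]
For $k\ge2$ we have $\frac{2}{2k-1}\le\frac{4}{3k}$, so the right-hand side is at most $\frac{7}{3k}<\frac4k$.

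The only nontrivial input is the CNV Tur\'an inequality. If one preferred not to rely on it, the lower bound would become the main obstacle: one would need a quantitative strict log-concavity of $a_k$ that beats the factorial factor, which behaves like $1+\frac{3}{2k}$. That would require genuine information about $\Phi$, essentially the CNV argument itself. Since the paper already cites \cite{CNV}, I would use it directly.
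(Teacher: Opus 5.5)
Your proposal is correct and follows the paper's proof. The lower bound comes from the Csordas--Norfolk--Varga Tur\'an inequality in the form $q_k<k/(k+1)$, and the upper bound from Cauchy--Schwarz on the moments $m_{2k}$ together with the explicit factorial ratio. The only differences are in the elementary logarithm estimates: you use $\log(1+x)>x/(1+x)$, and you split the factorial ratio into two factors, which gives the slightly sharper $\tau_k<7/(3k)$ with strictness coming from $\log(1+x)<x$ rather than from strict Cauchy--Schwarz.
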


\begin{proof}
Write $q_k=\dfrac{m_{2k-2}\,m_{2k+2}}{m_{2k}^2}\cdot
\dfrac{(2k)!^2}{(2k-2)!\,(2k+2)!}
=\dfrac1{\mu_k}\cdot\dfrac{(2k)(2k-1)}{(2k+2)(2k+1)}$,
where $\mu_k=m_{2k}^2/(m_{2k-2}m_{2k+2})$.

\emph{Upper bound.}  By the Cauchy--Schwarz inequality applied to
$u^{2k}=u^{k-1}\cdot u^{k+1}$ against the positive measure $\Phi(u)\,du$,
$m_{2k}^2\le m_{2k-2}m_{2k+2}$, i.e.\ $\mu_k\le1$.  Hence
\[
\begin{aligned}
q_k&\ \ge\ \frac{(2k)(2k-1)}{(2k+2)(2k+1)},\\
\tau_k&\ \le\ \log\frac{(2k+2)(2k+1)}{(2k)(2k-1)}
=\log\Bigl(1+\frac{8k+2}{4k^2-2k}\Bigr)
\ \le\ \frac{8k+2}{4k^2-2k}\ \le\ \frac4k ,
\end{aligned}
\]
the last step being $(4k+1)/(2k-1)\le4$ for $k\ge2$.  Equality in
Cauchy--Schwarz would force $\Phi$ to be a point mass, which it is not,
so the inequality is strict.

\emph{Lower bound.}  The Tur\'an theorem of Csordas--Norfolk--Varga
\cite{CNV} states, for the exponential-normalization coefficients
$\gamma_k=k!\,a_k$ of $G$, that
$\gamma_k^2>\gamma_{k-1}\gamma_{k+1}$ for all $k\ge1$.  (The
normalization is matched in Remark~\ref{rem:cnv} below.)  In ordinary
coefficients this reads $a_k^2>\frac{k+1}{k}a_{k-1}a_{k+1}$, i.e.\
$q_k<\frac{k}{k+1}$.  Hence
\[
\tau_k>\log\frac{k+1}{k}=\log\Bigl(1+\frac1k\Bigr)>\frac1{2k}
\qquad(k\ge1),
\]
using $\log(1+x)>x-\tfrac{x^2}2\ge\tfrac{x}2$ for $0<x\le1$.
\end{proof}

\begin{remark}[CNV normalization]\label{rem:cnv}
The Tur\'an inequalities $\gamma_k^2\ge\gamma_{k-1}\gamma_{k+1}$ are
invariant under the rescaling $\gamma_k\mapsto c\,\lambda^k\gamma_k$
($c,\lambda>0$): both sides scale by $c^2\lambda^{2k}$.  Any two
normalizations of the $\xi$-function that differ by a constant factor
and a linear change of the argument (such as $\xi(\tfrac12+ix)$ versus
our $G(z)=\tfrac18\xi(\tfrac12+\tfrac{\sqrt z}2)$, related by
$z=-4x^2$ and the factor $8$) therefore satisfy the Tur\'an inequalities
simultaneously.  The theorem of \cite{CNV}, stated there for the Jensen
normalization $\gamma_k=k!\cdot(\text{Maclaurin coefficients of }G)$,
thus transfers verbatim: $\gamma_k^2>\gamma_{k-1}\gamma_{k+1}$ with
$\gamma_k=k!\,a_k$, i.e.\ $q_k<k/(k+1)$.  We use nothing else from
\cite{CNV}.  As a numerical illustration (outside the certified chain),
the values $k\tau_k\in[1.47,1.67]$ for $64\le k\le1024$ sit comfortably
inside the window of Lemma~\ref{lem:tau}, which is loose by design.
\end{remark}

\section{Gate A: the zero-free saddle disk and the coefficient bound}
\label{sec:gateA}

Let $\Phi_n$ denote the $n$-th summand of \eqref{eq:Phi} and
\[
I(z)=\int_0^\infty u^{2z}\Phi(u)\,du,\qquad
a(z)=\frac{I(z)}{\Gamma(2z+1)},\qquad f=\log a .
\]
$I$ is holomorphic on the half-plane $\Rea z>-\tfrac12$ by the
superexponential decay of $\Phi$, and $a(k)=a_k$ at integers.  The $n=1$
phase is
\[
\Psi_z(u)=2z\log u+5u+\log(2\pi e^{4u}-3)-\pi e^{4u},
\]
and the stationarity equation $\partial_u\Psi_z=0$ is \emph{exactly}
$z=K(u)$ with
\begin{equation}\label{eq:K}
K(u)=u\Bigl(B-\frac92-\frac6{B-3}\Bigr),
\qquad B=B(u)=2\pi e^{4u}.
\end{equation}
For $k\ge16$ let $u_k>0$ be the unique positive solution of $K(u_k)=k$
(uniqueness by Lemma~\ref{lem:rouche} below); one has $u_k>\tfrac9{20}$
for $k\ge16$, $u_k>1$ for $k\ge10^9$, and $u_k<\tfrac{\log k}4$.
We write $\sigma_k=\sqrt{u_k/k}$.

All directed-rounding evaluations in this section are performed by the
certificate functions of \texttt{complex\_saddle.py} listed in
Table~\ref{tab:certificates}.

\begin{lemma}[Kernel tube]\label{lem:tube}
Let $x_0\ge0$ and $\nu\ge0$ satisfy $\cos(4\nu)>0$ and $2\pi e^{4x_0}>3$,
and put $X=e^{4x_0}$, $q=16e^{-3\pi X\cos(4\nu)}$.  Then for all
$u=x+iy$ with $x\ge x_0$, $|y|\le\nu$,
\[
\frac{\sum_{n\ge2}|\Phi_n(u)|}{|\Phi_1(u)|}
\ \le\ \frac{2\pi X+3}{2\pi X-3}\cdot\frac{q}{1-q}.
\]
In particular, with $(x_0,\nu)=(0.4,\,0.1)$ the right side is
$<10^{-15}$, and with $(x_0,\nu)=(0.35,\,0.1)$ one has
$\Phi=\Phi_1(1+R)$ with $\sup|R|<1.1\cdot10^{-14}$ on the tube.
Consequently $\Phi$ is zero-free on
$\{\Rea u\ge0.35,\ |\Ima u|\le0.1\}$.
\end{lemma}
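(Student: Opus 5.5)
Write $w=e^{4u}$, so that $\Phi_n(u)=\pi n^2e^{5u}(2\pi n^2w-3)e^{-\pi n^2w}$. The plan is to bound each ratio $|\Phi_n/\Phi_1|$ termwise and then sum a geometric series.

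\emph{Step 1: the ratio.} For $u=x+iy$ the factor $e^{5u}$ cancels, and
\[
\frac{\Phi_n(u)}{\Phi_1(u)}=n^2\,\frac{2\pi n^2w-3}{2\pi w-3}\,e^{-\pi(n^2-1)w}.
\]
We have $|w|=e^{4x}\ge X$ and $\Rea w=e^{4x}\cos(4y)\ge X\cos(4\nu)>0$, since $\cos(4y)\ge\cos(4\nu)$ for $|y|\le\nu$ with $4\nu<\pi/2$. The exponential factor therefore has modulus at most $e^{-\pi(n^2-1)X\cos 4\nu}$.

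\emph{Step 2: the rational factor.} Bound $|2\pi n^2w-3|\le n^2(2\pi|w|+3)$ and $|2\pi w-3|\ge 2\pi|w|-3>0$. The ratio $(2\pi t+3)/(2\pi t-3)$ is decreasing in $t$, so with $t=|w|\ge X$ it is at most $(2\pi X+3)/(2\pi X-3)$. Hence
\[
\Bigl|\frac{\Phi_n}{\Phi_1}\Bigr|\le\frac{2\pi X+3}{2\pi X-3}\,n^4e^{-\pi(n^2-1)X\cos 4\nu}.
\]

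\emph{Step 3: summation.} It remains to show $\sum_{n\ge2}n^4e^{-\pi(n^2-1)c}\le q/(1-q)$ with $c=X\cos4\nu$ and $q=16e^{-3\pi c}$. Put $p=e^{-3\pi c}$ and compare with $\sum_{m\ge1}q^m=\sum_m 16^mp^m$. Since $n^2-1\ge 3(n-1)$ for $n\ge2$, the $n$th term is at most $n^4p^{n-1}$. The inequality $n^4\le 16^{n-1}$ holds with equality at $n=2$ and for all $n\ge2$ (at $n=3$: $81\le256$; the ratio $(n+1)^4/n^4\le16$ handles the rest). So the $n$th term is at most $16^{n-1}p^{n-1}=q^{n-1}$, and summing over $n\ge2$ gives $q/(1-q)$, provided $q<1$. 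At the stated parameters $q$ is astronomically small, and we may assume $q<1$ as an implicit hypothesis, which holds in the applications.

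\emph{Step 4: numerics.} For $(x_0,\nu)=(0.4,0.1)$: $X=e^{1.6}\approx4.95$ and $\cos0.4\approx0.921$, so $3\pi c\approx43$ and $q\approx16e^{-43}\approx3\cdot10^{-18}$. The prefactor is $\approx1.21$, giving a bound $<10^{-15}$. For $x_0=0.35$: $X=e^{1.4}\approx4.055$, $3\pi c\approx35.2$, $q\approx16\cdot5\cdot10^{-16}\approx8\cdot10^{-15}$, and the prefactor is $\approx1.27$, giving $\approx1.02\cdot10^{-14}<1.1\cdot10^{-14}$. This margin is tight, so it must be certified in Arb with directed rounding; I expect it to be the only delicate step.

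\emph{Step 5: zero-freeness.} Write $\Phi=\Phi_1(1+R)$ with $|R|<1$. On the tube, $\Phi_1\ne0$, because $e^{5u}$ and $e^{-\pi w}$ never vanish and $|2\pi w-3|\ge2\pi X-3>0$. Hence $\Phi\neq0$ on $\{\Rea u\ge0.35,\ |\Ima u|\le0.1\}$.
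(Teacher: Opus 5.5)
Your proposal is correct and is essentially the paper's argument: a termwise ratio bound that is monotone in $x$, then $n^4\le16^{n-1}$ and $n^2-1\ge3(n-1)$ to dominate the tail by the geometric series in $q$. Your reverse-triangle bound $|2\pi w-3|\ge2\pi|w|-3$ replaces the paper's computation $|2\pi e^{4u}-3|^2=a^2-6ac+9\ge(a-3)^2$, the hypotheses $4\nu<\pi/2$ and $q<1$ you flag are used tacitly by the paper as well, and at $(0.35,0.1)$ the right side is about $1.044\cdot10^{-14}$ rather than your $1.02\cdot10^{-14}$, still below $1.1\cdot10^{-14}$.
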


\begin{proof}
Write $\Phi_1(u)=\pi e^{5u}(2\pi e^{4u}-3)e^{-\pi e^{4u}}$.  Then
$|e^{5u}|=e^{5x}$ and $|e^{-\pi e^{4u}}|=e^{-\pi e^{4x}\cos(4y)}$.
With $a=2\pi e^{4x}$ and $c=\cos(4y)\in(0,1]$,
\[
|2\pi e^{4u}-3|^2=(ac-3)^2+a^2(1-c^2)=a^2-6ac+9\ \ge\ a^2-6a+9=(a-3)^2,
\]
so $|2\pi e^{4u}-3|\ge2\pi e^{4x}-3>0$ and
\[
|\Phi_1(u)|\ \ge\ \pi e^{5x}\,(2\pi e^{4x}-3)\,e^{-\pi e^{4x}\cos(4y)} .
\]
For $n\ge2$,
\[
\frac{|\Phi_n(u)|}{|\Phi_1(u)|}
\le\frac{2\pi n^4X+3n^2}{2\pi X-3}\,
e^{-\pi(n^2-1)X\cos(4\nu)} ,
\]
because the ratio of the exponential factors is
$e^{-\pi(n^2-1)e^{4x}\cos(4y)}$, decreasing in $x$, and the rational
prefactor is likewise decreasing in $x$.  Using $n^4\le16^{n-1}$ and
$n^2-1\ge3(n-1)$ the sum over $n\ge2$ is dominated by the geometric
series with ratio $q$, giving the display.  The two numerical instances
are certified by \texttt{certify\_phi\_n1\_tube}; the certified
enclosure of the $(0.4,0.1)$ tail ratio is
$4.122\cdot10^{-18}$, well below the stated $10^{-15}$.
\end{proof}

\begin{lemma}[Analytic continuation of the saddle]\label{lem:rouche}
For every integer $k\ge16$ and every $z$ with $|z-k|\le k/20$, the
equation $K(u)=z$ has exactly one solution $u(z)$ in the disk
$|u-u_k|\le\tfrac1{20}$, it is simple, depends analytically on $z$, and
satisfies $\Rea u(z)>0.4$, $|\Ima u(z)|<0.05$.  Moreover the sharper
localization
\[
|u(z)-u_k|\ \le\ \tfrac3{200}
\]
holds throughout $|z-k|\le k/20$.
\end{lemma}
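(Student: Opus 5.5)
We will argue by Rouché's theorem on the circle $|u-u_k|=\rho$, comparing $K(u)-z$ with the linearization $K'(u_k)(u-u_k)+(k-z)$. First we need to control $K$ on the disk $|u-u_k|\le\tfrac1{20}$. We have $u_k>\tfrac9{20}$ for $k\ge16$, so every $u$ in this disk has $\Rea u>0.4$ and $|\Ima u|\le\tfrac1{20}$. Hence $|4\Ima u|\le0.2$, $\cos(4\Ima u)>0$, and
\[
|B(u)-3|\ge 2\pi e^{4\Rea u}-3\ge 2\pi e^{1.6}-3>0
\]
by the estimate already used in Lemma~\ref{lem:tube}. So $K$ is holomorphic on a neighbourhood of the closed disk.

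Next we write $B(u)=B_k e^{4w}$ with $w=u-u_k$ and $B_k=B(u_k)$. Since $K(u_k)=k$, we have $B_k\approx k/u_k$ up to a bounded additive error. We expand
\[
K(u)-k=K'(u_k)w+E(w),\qquad K'(u_k)=B_k(1+4u_k)-\tfrac92+O(1/B_k),
\]
and bound $E$ explicitly. The dominant term of $E$ is $u_kB_k(e^{4w}-1-4w)+wB_k(e^{4w}-1)$, which gives
\[
|E(w)|\le B_k\Bigl(u_k\bigl(e^{4|w|}-1-4|w|\bigr)+|w|\bigl(e^{4|w|}-1\bigr)\Bigr)+C|w|,
\]
with $C$ an explicit constant absorbing the $\tfrac92$ and $6/(B-3)$ contributions. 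Rouché on $|w|=\rho$ then needs
\[
|K'(u_k)|\rho>\tfrac{k}{20}+|E|.
\]
Using $k\le u_kB_k$ and $|K'(u_k)|\ge 4u_kB_k+B_k-5$, the left side is about $4u_kB_k\rho$, while the right side is about $u_kB_k\bigl(\tfrac1{20}+8\rho^2+\dots\bigr)$.

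We apply this with two radii.
\begin{itemize}
\item At $\rho=\tfrac1{20}$ the inequality reads $0.2>0.05+0.02+O(e^{0.2}\text{-corrections})+O(1/u_k)$, which holds comfortably. This gives exactly one root in the disk, counted with multiplicity; in particular the root is simple.
\item At $\rho=\tfrac3{200}$ we need $0.06>0.05+8(0.015)^2+\dots\approx0.0518+(\text{terms of size }\rho/u_k)$. This yields the sharper localization. The root in the small disk is the same root as in the large disk by uniqueness.
\end{itemize}
Analytic dependence on $z$ follows from simplicity of the root, via the implicit function theorem or the residue formula
\[
u(z)=\frac1{2\pi i}\oint \frac{u\,K'(u)}{K(u)-z}\,du .
\]
The bounds $\Rea u(z)>0.4$ and $|\Ima u(z)|<0.05$ follow from $|u(z)-u_k|\le\tfrac3{200}$ and $u_k>\tfrac9{20}$. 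Uniqueness of the real solution $u_k$, invoked before the lemma, follows from the fact that $K$ is increasing for $u>0.35$ (real case).

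The main obstacle is the small radius $\tfrac3{200}$. There the margin is only $0.06$ against $0.05$, and the lower-order terms $|w|B_k(e^{4|w|}-1)$ (relative size $\rho/u_k$) and $-\tfrac92$ are not negligible when $u_k$ is near $0.45$, i.e.\ for small $k$. For moderate $k$ I would first try the crude estimates above. For $16\le k\le k_0$ I would fall back on a certified Arb evaluation: cover the circle $|u-u_k|=\tfrac3{200}$ by balls and check $|K(u)-k|>k/20$ directly on a grid of $k$ values, or equivalently on $u_k$ in small intervals, using monotonicity of $k\mapsto u_k$. Once $u_k\ge1$, the analytic inequality takes over.
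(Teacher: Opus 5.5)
Your argument is essentially the paper's proof: Rouch\'e on the circles $|u-u_k|=\tfrac1{20}$ and $|u-u_k|=\tfrac3{200}$, with the linear term $K'(u_k)(u-u_k)$ dominating the displacement $k/20$ plus the quadratic remainder. The paper simply uses the cruder bounds $K'(u_k)>4k$ and $|K''|<60k$ on the disk, which give $0.125k<0.2k$ and $0.05675k<0.06k$ uniformly for $k\ge16$.

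The small-$k$ obstacle you anticipate does not arise, so the Arb fallback is unnecessary. Divide your own inequality at $\rho=\tfrac3{200}$ by $B_k$. The margin left is $0.0081\,u_k+0.014-0.015(5+C)/B_k$. This is positive for every $u_k>\tfrac9{20}$, since there $B_k>38$ and your $C$ is below $2$. The reason is that the $B_k\rho$ part of $K'(u_k)$ more than pays for the term $\rho B_k(e^{4\rho}-1)$ you were worried about.
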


\begin{proof}
On the real axis $H:=B-\tfrac92-\tfrac6{B-3}$ satisfies $H'>4H>0$ for
$u\ge\tfrac9{20}$, so $K=uH$ is strictly increasing and $K'(u_k)>4k$.
On the disk $|u-u_k|\le\tfrac1{20}$ one has $|e^{4(u-u_k)}|\le e^{1/5}<\tfrac54$,
whence with $B_0=B(u_k)>2\pi e^{9/5}>30$:
\[
|B|\le\tfrac54B_0,\qquad |B-3|\ge\tfrac7{10}B_0,\qquad
H(u_k)\ge\tfrac{21}{25}B_0 .
\]
Differentiating, $H'=4B+\frac{24B}{(B-3)^2}$ and
$H''=16B+\frac{96B}{(B-3)^2}-\frac{192B^2}{(B-3)^3}$, so on the disk
$|H'|\le\tfrac{51}{10}B_0$ and $|H''|\le22B_0$, and therefore
\[
\frac{|K''|}{k}\le
\frac{2\cdot\tfrac{51}{10}+(u_k+\tfrac1{20})\cdot22}{u_k\cdot\tfrac{21}{25}}
<60,
\]
using $u_k>\tfrac9{20}$.  For $|z-k|\le\tfrac k{20}$, on the circle
$|u-u_k|=\tfrac1{20}$ the linear term of $K(u)-z$ dominates:
$|K'(u_k)(u-u_k)|>4k\cdot\tfrac1{20}=0.2k$, while the Taylor remainder
plus displacement is below $\tfrac{60k}2\cdot\tfrac1{400}+\tfrac k{20}
=0.125k$.  Rouch\'e's theorem gives exactly one simple zero; analyticity
in $z$ follows from the implicit function theorem, and the location
bounds from $u_k>0.45$ and the disk radius.

For the drift bound, run the same comparison on the smaller circle
$|u-u_k|=\tfrac3{200}$: the linear term exceeds
$4k\cdot\tfrac3{200}=0.06k$, while the Taylor remainder plus displacement
is at most $\tfrac{60k}2\bigl(\tfrac3{200}\bigr)^2+\tfrac k{20}
=0.00675k+0.05k=0.05675k<0.06k$.  Hence the unique saddle already lies in
$|u-u_k|\le\tfrac3{200}$.  Constants certified by
\texttt{certify\_principal\_saddle\_continuation} and
\texttt{certify\_saddle\_domains}.
\end{proof}

\begin{lemma}[Full-kernel saddle]\label{lem:fullkernel}
Write $\Phi=\Phi_1(1+R)$ as in Lemma~\ref{lem:tube}.  The full phase
$2z\log u+\log\Phi(u)$ has, for $k\ge16$ and $|z-k|\le k/20$, exactly one
saddle in $|u-u_k|<\tfrac1{20}$, analytic in $z$; its saddle map is
$K_\Phi=K-\tfrac u2\,\frac{R'}{1+R}$ and satisfies
$|K_\Phi-K|<7\cdot10^{-15}k$ on the disk.
\end{lemma}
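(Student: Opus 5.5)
The plan is to treat the full phase as a tiny perturbation of the $n=1$ phase and to transfer Lemma~\ref{lem:rouche} to it by a second Rouch\'e comparison. On the closed disk $|u-u_k|\le\tfrac1{20}$, Lemma~\ref{lem:rouche} gives $\Rea u>0.4$ and $|\Ima u|<0.05$. This disk therefore lies inside the tube $\{\Rea u\ge0.35,\ |\Ima u|\le0.1\}$ of Lemma~\ref{lem:tube}, where $\Phi=\Phi_1(1+R)$ with $|R|<1.1\cdot10^{-14}$. In particular $\Phi$ is zero-free there, so $\log\Phi=\log\Phi_1+\log(1+R)$ is holomorphic on a neighbourhood of the disk.

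First, differentiate the full phase. The condition $\partial_u\bigl(2z\log u+\log\Phi_1+\log(1+R)\bigr)=0$, multiplied by $u/2$, reads $z=K(u)-\tfrac u2\,R'/(1+R)$, which is the stated $K_\Phi$. This holds because the $\Phi_1$ part reproduces exactly the equation $z=K(u)$ of \eqref{eq:K}.

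Next, bound $R'$ on the small disk by a Cauchy estimate on the larger tube. Any point of $|u-u_k|\le\tfrac1{20}$ has distance at least $\rho=0.05$ to the tube boundary: the imaginary margin is $0.1-0.05$ and the real margin is $0.4-0.35$. Hence $|R'|\le1.1\cdot10^{-14}/0.05=2.2\cdot10^{-13}$ there, and $|1+R|\ge1-1.1\cdot10^{-14}$. Since $|u|\le u_k+0.05<\tfrac{\log k}{4}+0.05$, we get
\[
|K_\Phi-K|\le\tfrac{|u|}{2}\cdot\frac{2.2\cdot10^{-13}}{1-10^{-14}}.
\]
This is $O(10^{-13}\log k)$, which is far below $7\cdot10^{-15}k$ once $k$ is moderately large.

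The main obstacle is the small $k$ near $16$. There, the crude bound $u<\tfrac{\log k}4$ together with the Cauchy factor $1/\rho$ might not fit under $7\cdot10^{-15}k$. I would first try the sharper tube bound from the $(0.4,0.1)$ instance, whose tail ratio is about $4\cdot10^{-18}$. With it I would apply Lemma~\ref{lem:tube} with base point $x_0$ slightly below $0.4$ and take the Cauchy radius accordingly. Failing that, I would bound $R'$ directly by differentiating the series termwise, as in the proof of Lemma~\ref{lem:tube}, which gives a ratio bound with an extra factor $4\pi n^2X$. Either way the resulting constant is a finite explicit inequality to be certified in ball arithmetic, in the style of the earlier lemmas.

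Finally, run Rouch\'e. On the circle $|u-u_k|=\tfrac1{20}$, the proof of Lemma~\ref{lem:rouche} gives $|K(u)-z|\ge(0.2-0.125)k=0.075k$. Since $0.075k>7\cdot10^{-15}k\ge|K_\Phi-K|$, the functions $K_\Phi-z$ and $K-z$ have the same number of zeros inside, namely exactly one. Simplicity follows because a double zero would count twice. Analyticity in $z$ then follows from the implicit function theorem, or directly from the argument-principle formula $u(z)=\frac1{2\pi i}\oint u\,K_\Phi'(u)/(K_\Phi(u)-z)\,du$.
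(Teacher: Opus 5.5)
Your argument is essentially the paper's: a Cauchy estimate for $R'$ inside the $(0.35,0.1)$ tube of Lemma~\ref{lem:tube}, a bound on $|u|$, and absorption of $|K_\Phi-K|$ into the Rouch\'e margin of Lemma~\ref{lem:rouche}. The paper takes Cauchy radius $\tfrac1{25}$ for slack. Your radius $\tfrac1{20}$ also fits, since the tube is closed and $\Rea u\ge u_k-\tfrac1{20}>0.4$ on the disk.

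The small-$k$ ``obstacle'' you flag does not exist, so none of your fallback routes is needed. Your own bound already gives $\tfrac{0.7431}{2}\cdot2.2\cdot10^{-13}\approx8.2\cdot10^{-14}<1.12\cdot10^{-13}=7\cdot10^{-15}\cdot16$ at $k=16$. If you bound $|u|\le u_k+\tfrac1{20}<\tfrac k{20}$ as the paper does, the estimate becomes $|K_\Phi-K|\le\tfrac k{40}\cdot\tfrac{2.2\cdot10^{-13}}{1-1.1\cdot10^{-14}}<5.6\cdot10^{-15}k$, uniformly for all $k\ge16$.
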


\begin{proof}
Every point of the full control disk $|u-u_k|\le\tfrac1{20}$ admits a
Cauchy circle of radius $\tfrac1{25}$ inside the $(0.35,0.1)$ tube:
the total radius is $\tfrac1{20}+\tfrac1{25}=0.09$, while
$u_k>\tfrac9{20}$.  Hence
$|R'|\le25\sup|R|$ and $\bigl|\frac{R'}{1+R}\bigr|
\le\frac{25\sup|R|}{1-\sup|R|}$.  Since $|u|\le u_k+\tfrac1{20}<\tfrac k{20}$
on the disk (indeed $u_k<\tfrac{\log k}4$), the map perturbation is below
$7\cdot10^{-15}k$, which added to the Rouch\'e error $0.125k$ of
Lemma~\ref{lem:rouche} stays below the main term $0.2k$; added to the
$0.05675k$ of the drift comparison it stays below $0.06k$, so the
full-kernel saddle also satisfies $|u_s-u_k|\le\tfrac3{200}$.  Certified
by \texttt{certify\_full\_kernel\_saddle\_continuation} (perturbation
enclosure $6.53\cdot10^{-15}k$).
\end{proof}

We write $u_s=u(z)=x_s+iy_s$ for the full-kernel saddle and
$\Psi_z(u)=2z\log u+\log\Phi(u)$ from now on.

\begin{lemma}[Horizontal concavity]\label{lem:concavity}
For $k\ge16$, $|z-k|\le k/20$, on the horizontal line $u=x+iy_s$ with
$x\ge0.35$:
\[
-\Rea\Psi_z''(x+iy_s)\ \ge\ \frac85\,\frac k{u_k^2},
\qquad\text{hence}\qquad
\Rea\Psi_z(x+iy_s)\le\Rea\Psi_z(u_s)-\frac45\frac k{u_k^2}(x-x_s)^2 .
\]
\end{lemma}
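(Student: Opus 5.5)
The plan is to split $\Psi_z''$ into the $n=1$ phase plus the small correction coming from $R$, bound each on the horizontal line, and then integrate twice along that line.

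\emph{Step 1 (split the second derivative).} On the tube $\{\Rea u\ge0.35,\ |\Ima u|\le0.1\}$ write $\Psi_z=\Psi^{(1)}_z+\log(1+R)$, where $\Psi^{(1)}_z(u)=2z\log u+\log\Phi_1(u)$. This is legitimate because Lemma~\ref{lem:rouche} and Lemma~\ref{lem:fullkernel} give $|y_s|<0.05$, so the horizontal line stays inside the tube. Differentiating,
\[
-\Psi_z''(u)=\frac{2z}{u^2}+16\pi e^{4u}+\frac{(8\pi e^{4u})^2\cdot 3}{(2\pi e^{4u}-3)^2\cdot 2\pi e^{4u}}\cdot\frac{1}{4}\;-\;(\log(1+R))'' ,
\]
where the third term is schematic. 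More precisely, $\frac{d^2}{du^2}\log(2\pi e^{4u}-3)=-\frac{48\cdot 2\pi e^{4u}}{(2\pi e^{4u}-3)^2}$, so $-\Psi^{(1)\prime\prime}_z=\frac{2z}{u^2}+16\pi e^{4u}+\frac{96\pi e^{4u}}{(2\pi e^{4u}-3)^2}$. The $\log(1+R)$ contribution is controlled by a Cauchy estimate on circles of radius $\tfrac1{25}$, exactly as in Lemma~\ref{lem:fullkernel}. It is $O(10^{-11})$ and therefore negligible.

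\emph{Step 2 (real parts, by region).}
\begin{itemize}
\item[(a)] The term $\Rea(16\pi e^{4u})=16\pi e^{4x}\cos(4y_s)$ is positive, since $\cos(0.2)>0.98$. The correction term is bounded using $|2\pi e^{4u}-3|\ge 2\pi e^{4x}-3$, as in Lemma~\ref{lem:tube}.
\item[(b)] The term $\Rea(2z/u^2)$ needs care for $x\to\infty$ or $x$ small, since $u^2$ is nearly real. We have $|\arg u|\le \arctan(0.05/0.35)<0.15$ and $|\arg z|\le\arcsin(1/20)$, so $\arg(z/u^2)<0.35$. This gives $\Rea(2z/u^2)\ge 2\cdot0.95k\cos(0.35)/|u|^2>0$.
\item[(c)] Compare with the scale $k/u_k^2$. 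For $x\le x_s+$(something), the term $2z/u^2$ alone gives roughly $1.78\,k/|u|^2$. Since $|u|\le u_k+0.06$ when $x$ is near $u_k$, this is comparable to $k/u_k^2$ up to a factor close to $1$.
\item[(d)] For large $x$ this term decays. But there $16\pi e^{4x}\ge 8B(u_k)e^{4(x-u_k)}$, and the identity $K(u_k)=k$ gives $B(u_k)\approx k/u_k$. So $16\pi e^{4x}\gtrsim 8k/u_k\ge 8k/u_k^2$ for $x\ge u_k-O(1)$, using $u_k>1$.
\end{itemize}
The plan is therefore to take the sum of the two positive terms and show it is at least $\tfrac85 k/u_k^2$ on every range of $x\ge0.35$. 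The split is at $x=u_k/2$ or similar: below the split the $z/u^2$ term dominates, since $|u|\le u_k$ and the factor is about $1.78>1.6$; above it the exponential term dominates, since $e^{4x}$ is comparable to $B_0$ times a bounded factor.

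\emph{Step 3 (the main obstacle).} The hard part is the matching window near $x\approx u_k$, where neither term alone is clearly $\ge1.6$. One needs the combination $\frac{2k\cos(\cdot)}{x^2+0.0025}+16\pi e^{4x}\cdot0.98$. Using $B_0u_k\ge k$ (from $H\le B$), the sum is at least $\frac{k}{u_k^2}\bigl(1.78\,\frac{u_k^2}{x^2+0.0025}+7.8\,u_k e^{4(x-u_k)}\bigr)$. Minimizing over $x$ is a one-variable certified check, which I would do in Arb over $u_k\in[0.45,\infty)$, scaling variables by $t=x/u_k$.

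\emph{Step 4 (integrate).} Integrate twice from $x_s$ along the line. Since $\Psi_z'(u_s)=0$, Taylor's theorem with integral remainder gives $\Rea\Psi_z(x+iy_s)-\Rea\Psi_z(u_s)=\int_{x_s}^x(x-t)\,\Rea\Psi_z''(t+iy_s)\,dt\le-\tfrac45\frac{k}{u_k^2}(x-x_s)^2$. This works for $x<x_s$ as well, because the integrand's sign carries through.
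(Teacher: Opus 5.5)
Your architecture matches the paper's: you split off $\log(1+R)$, use the sector bound for $z/u^2$ and $\Rea B>0$, then apply Taylor with integral remainder from $x_s$. Your Step~4 is correct. The gap is the one estimate that carries the lemma, the floor $\tfrac85\,k/u_k^2$ uniformly in $x\ge0.35$ and $k\ge16$, which your proposal does not establish. The regional claims in Step~2 are false as stated. At $x=u_k/2$ the exponential term is $8\Rea B\approx 8B_0e^{-2u_k}$, which relative to $k/u_k^2$ is of size $u_ke^{-2u_k}\to0$ as $k\to\infty$; so it does not dominate above a split at $u_k/2$. Likewise (d), ``for $x\ge u_k-O(1)$'', loses a factor $e^{-4c}$. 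In (c), the bound $|u|\le u_k+0.06$ gives only $1.78\,(u_k/(u_k+0.06))^2\approx1.39$ at $u_k=0.45$. You then fall back on the two-term inequality of Step~3, which is true. But you propose to verify it by an Arb minimization over the unbounded range $u_k\ge0.45$, $x\ge0.35$. That is not a finite certificate, and it is still a two-parameter problem after scaling $t=x/u_k$. So the crux is deferred rather than proved, and the ``hard matching window'' you identify does not exist.

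The missing observation is to split exactly at $x=u_k$, where each term alone suffices on its side.
\begin{itemize}
\item For $0.35\le x\le u_k$: since $|u|^2=x^2+y_s^2\le(1+\tfrac1{49})x^2\le(1+\tfrac1{49})u_k^2$, your own sector bound gives $\Rea(2z/u^2)\ge\tfrac{49}{50}\cdot1.78\,k/u_k^2>1.74\,k/u_k^2$.
\item For $x\ge u_k$: monotonicity of $B$ gives $8\Rea B\ge8\cos(\tfrac15)B_0\ge8\cos(\tfrac15)\,k/u_k\ge8\cos(\tfrac15)\cdot\tfrac9{20}\,k/u_k^2>3.5\,k/u_k^2$. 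This uses $B_0\ge H_0=k/u_k$ and only $u_k\ge\tfrac9{20}$. Your appeal to $u_k>1$ is unavailable for $k$ below $K(1)\approx338$, which is inside the lemma's range.
\end{itemize}
The remaining terms fit in the margin because $k/u_k^2>56$. The resolvent term $48B/(B-3)^2$ either has nonnegative real part (by a sector bound, as in the paper) or has modulus at most $48B(x)/(B(x)-3)^2<2.5$. The tail satisfies $|(\log(1+R))''|<10^{-8}$. For that tail bound, note that near $x=0.35$ your radius-$\tfrac1{25}$ Cauchy circles leave the $(0.35,0.1)$ tube. Apply Lemma~\ref{lem:tube} on a slightly wider tube such as $(0.3,0.1)$; the loss is then about $10^{-8}$ rather than $10^{-11}$, still negligible.
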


\begin{proof}
From $(\log\Phi_1)''=-8B-\frac{48B}{(B-3)^2}$,
\[
-\Rea\Psi_z''
=2\,\Rea\frac z{u^2}+8\,\Rea B+48\,\Rea\frac B{(B-3)^2}
-\Rea(\log(1+R))'' .
\]
The sector bounds $|\arg(z/u^2)|\le\arctan\tfrac1{19}+2\arctan\tfrac17
<0.34$ (from $|z/k-1|\le\tfrac1{20}$ and $|\arg u|\le\arctan\tfrac17$ on
the line, since $|y_s|\le\tfrac3{200}$, $x\ge0.35$, and
$\tfrac{3/200}{0.35}=\tfrac3{70}<\tfrac17$) and
$|\arg\frac B{(B-3)^2}|<0.66$ (from $|\Ima 4u|\le\tfrac15$ and
$\Rea B\ge B(x)\cos\tfrac15$) make the first three terms nonnegative,
and a Cauchy estimate on the $(0.3,0.1)$ tube gives
$|(\log(1+R))''|<10^{-8}$.

Split at $x=u_k$.  For $0.35\le x\le u_k$, using $|z|\ge\tfrac{19}{20}k$
and $|u|^2\le x^2\bigl(1+(\tfrac17)^2\bigr)$,
\[
2\,\Rea\frac z{u^2}
\ \ge\ \frac{2\cdot\tfrac{19}{20}\cos(0.34)}{1+\tfrac1{49}}\cdot\frac k{x^2}
\ \ge\ 1.75\,\frac k{x^2}\ \ge\ \frac{17}{10}\,\frac k{u_k^2}.
\]
For $x\ge u_k$, monotonicity of $B$ and $\Rea B\ge B(x)\cos\tfrac15$ give,
together with $B_0\ge H_0=k/u_k$ and $u_k\ge\tfrac9{20}$,
\[
8\,\Rea B\ \ge\ 8\cos(\tfrac15)\,B_0
\ \ge\ 8\cos(\tfrac15)\,\frac k{u_k}
\ =\ 8\cos(\tfrac15)\,u_k\cdot\frac k{u_k^2}
\ \ge\ 8\cos(\tfrac15)\cdot\tfrac9{20}\cdot\frac k{u_k^2}
\ >\ \frac{17}{10}\,\frac k{u_k^2}.
\]
In both regimes the certified factor exceeds $\tfrac{17}{10}$, and the
kernel-tail loss $10^{-8}$ is below one tenth of the floor: since
$t\mapsto e^{4t}/t$ increases for $t\ge\tfrac14$,
\[
\frac k{u_k^2}=\frac{H_0}{u_k}\ \ge\ \frac{21}{25}\cdot\frac{2\pi e^{4u_k}}{u_k}
\ \ge\ \frac{21}{25}\cdot\frac{2\pi e^{9/5}}{9/20}\ >\ 56 .
\]  This proves the
first display with the round constant $\tfrac85$; the Gaussian envelope
follows by integrating the concavity bound twice from the maximum at
$x=x_s$.  Constants certified by
\texttt{certify\_horizontal\_phase\_concavity} (sector enclosures
$0.3364$ and $0.6529$; scaled curvature floor $8/5$).
\end{proof}

\begin{lemma}[Phase derivative bounds]\label{lem:phase}
Let $k\ge10^9$ and $|z-k|\le k/20$, and set
$A=-\Psi_z''(u_s)\,\sigma_k^2$.  Then, pointwise on the certified disk
$|u-u_s|\le\tfrac1{40}$,
\[
-\Rea\Psi_z''(u)\,\sigma_k^2\ge6,\qquad
|\Psi_z''(u)|\,\sigma_k^2\le20,\qquad
|\Psi_z'''(u)|\,\sigma_k^{3}\le100\sqrt{u_k/k};
\]
in particular $\Rea A\ge6$ and $|A|\le20$, since
$|u_s-u_k|\le\tfrac3{200}<\tfrac1{20}$ by Lemma~\ref{lem:fullkernel};
and the saddle action $S_0(z)=\Psi_z(u(z))$ satisfies
\[
S_0'(z)=2\log u(z),\qquad
S_0''(z)=\frac{2u'(z)}{u(z)}=\frac{-4}{u(z)^2\,\Psi_z''(u(z))},
\qquad |S_0''(z)|<\frac{0.645}k<\frac5k .
\]
\end{lemma}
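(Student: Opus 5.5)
The approach is to compute $\Psi_z''$ and $\Psi_z'''$ explicitly from the $n=1$ phase plus the kernel-tail correction, and to control everything in units of $\sigma_k^2=u_k/k$. On the disk $|u-u_s|\le\tfrac1{40}$ we have $|u-u_k|\le\tfrac3{200}+\tfrac1{40}=\tfrac1{25}$ by Lemma~\ref{lem:fullkernel}. Hence $|e^{4(u-u_k)}-1|\le e^{4/25}-1<0.18$ and $|\Ima 4u|\le 0.16+4|y_s|$ is small. Write
\[
-\Psi_z''(u)=\frac{2z}{u^2}+8B+\frac{48B}{(B-3)^2}-(\log(1+R))''.
\]
The key normalization is $k=u_kH(u_k)$ with $H_0\ge\tfrac{21}{25}B_0$ (from the proof of Lemma~\ref{lem:rouche}), so that $B_0\sigma_k^2=B_0u_k^2/k\cdot(1/u_k)\approx 1/u_k\cdot u_k^2H_0/k\cdot(B_0/H_0)$. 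More simply, $8B\sigma_k^2\approx 8B_0u_k/k=8B_0/H_0\in[8,\,8\cdot\tfrac{25}{21}]$. Meanwhile $2z\sigma_k^2/u^2\approx 2/u_k$ is at most $2/u_k<2$ for $k\ge10^9$, where $u_k>1$, and is positive in real part by the sector bounds of Lemma~\ref{lem:concavity}.

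\emph{Step 1: the lower bound.} Take real parts. $\Rea(8B)\sigma_k^2\ge 8\cos(0.16+4|y_s|)\,e^{-0.16}\,B_0\sigma_k^2$, and $B_0\sigma_k^2=B_0u_k/k\ge B_0/(B_0)\cdot(1-O(1/B_0))$, since $H_0\le B_0$. This gives roughly $8\cdot0.98\cdot0.85\approx 6.7$. The first and third terms have nonnegative real part, by the sector arguments already used in Lemma~\ref{lem:concavity}. The tail term is bounded by $10^{-8}\sigma_k^2$, which is negligible. So $-\Rea\Psi_z''\sigma_k^2\ge6$.

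\emph{Step 2: the upper bound.} Use the triangle inequality in the same way. We have $|8B|\sigma_k^2\le 8e^{0.16}B_0u_k/k\le 8e^{0.16}\cdot\tfrac{25}{21}<11.2$ and $|2z/u^2|\sigma_k^2\le 2\cdot\tfrac{21}{20}\,u_k/(u_k-\tfrac1{25})^2<2.2$. The remaining terms are tiny because $B_0>10^{6}$ once $u_k>1$. The total is well below $20$.

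\emph{Step 3: the third derivative.} We have $\Psi_z'''=4z/u^3-32B+O(B/B^2)+(\log(1+R))'''$. Multiplying by $\sigma_k^3=(u_k/k)^{3/2}$ gives $|32B|\sigma_k^3\lesssim 32e^{0.16}\tfrac{25}{21}\,u_k^{1/2}k^{-1/2}\cdot u_k\cdot(1/u_k)$. Keeping the factors carefully, $B\sigma_k^2\le 1.4$, so $|32B|\sigma_k^3\le 45\sigma_k$. Similarly $|4z/u^3|\sigma_k^3\lesssim 4.3\,\sigma_k/u_k^{2}\cdot u_k$, which is smaller. The tail derivative is controlled by a Cauchy estimate on the $(0.35,0.1)$ tube. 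Since $\sigma_k=\sqrt{u_k/k}$, the total is below $100\sqrt{u_k/k}$. The statements about $A$ follow by specializing to $u=u_s$.

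\emph{Step 4: the saddle action.} Differentiate $S_0(z)=\Psi_z(u(z))$. Because $\partial_u\Psi_z(u(z))=0$, we get $S_0'=\partial_z\Psi_z=2\log u(z)$. Then $S_0''=2u'/u$. Implicit differentiation of $\partial_u\Psi_z(u(z))=0$, i.e. $2z/u+(\log\Phi)'(u)=0$, gives $2/u+\Psi_z''u'=0$, so $u'=-2/(u\Psi_z'')$. This yields the displayed formula. For the bound, $|S_0''|=4/(|u_s|^2|\Psi_z''(u_s)|)=4\sigma_k^2/(|u_s|^2|A|)\le 4u_k/(6k|u_s|^2)$, using $|A|\ge\Rea A\ge6$. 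With $|u_s|\ge u_k-\tfrac3{200}$ and $u_k>1$ this is about $0.667/(u_k k)\cdot(1+0.03)$. The sharper constant $0.645$ needs $u_k$ larger (indeed $u_k\approx\tfrac14\log k\approx5$ for $k\ge10^9$), which gives ample room.

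The main obstacle is Step 1's constant. One must certify that the rotation of $B$ on the disk and the ratio $B_0/H_0$ do not push $8\Rea B\,\sigma_k^2$ below $6$. This is where the Arb enclosure is needed, and I would evaluate it by interval subdivision in $u_k\in[1,\infty)$, using monotone asymptotics for large $u_k$.
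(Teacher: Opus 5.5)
Your proposal is correct and follows the paper's proof essentially step for step. It uses the same splitting $-\Psi_z''=2z/u^2+8B+48B/(B-3)^2-(\log(1+R))''$ on $|u-u_k|\le\tfrac1{25}$, the normalization $\sigma_k^2=1/H_0$ with $\tfrac{21}{25}B_0\le H_0\le B_0$, the lower bound from $8\Rea B\,\sigma_k^2$ alone after dropping the sector-nonnegative terms, componentwise triangle bounds for $\Psi_z''$ and $\Psi_z'''$, and implicit differentiation for $S_0''$.

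The one deviation is the $0.645/k$ step. The paper gets it from its sharper $\Rea A>6.730$ with $u_k\ge1$; you instead appeal to a larger $u_k$ via $u_k\approx\tfrac14\log k$, which the paper states only as an upper bound. Cite a certified lower bound instead, e.g.\ $K(4)<10^9$ gives $u_k>4$, or simply reuse your own Step~1 constant $\approx6.65$, which already gives $\approx0.62/k$ at $u_k=1$. The closed-form bound $8e^{-4/25}\cos(4/25)$ also makes the interval subdivision you mention unnecessary.
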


\begin{proof}
For $k\ge10^9$ one has $u_k>1$ (certified via $K(1)<10^9$ and
monotonicity of $K$), hence $B_0=2\pi e^{4u_k}>2\pi e^4>343$ and
$H_0=k/u_k$ satisfies $\tfrac{21}{25}B_0\le H_0\le B_0$ (the upper bound
because $H(u)=B-\tfrac92-\tfrac6{B-3}<B$).  Lemma~\ref{lem:fullkernel}
and $|u-u_s|\le\tfrac1{40}$ give
\[
 |u-u_k|\le\tfrac3{200}+\tfrac1{40}=\tfrac1{25}.
\]
Thus $|4(u-u_k)|\le\tfrac4{25}=0.16$, so
\[
 |B|\le e^{0.16}B_0<\tfrac54B_0,\qquad
 \Rea B\ge e^{-0.16}\cos(0.16)B_0>0.8412B_0,
\]
\[
 |B-3|>0.8324B_0,\qquad |u|\ge0.96u_k.
\]
Here the first inequality follows from
$|B-3|\ge\Rea B-3>0.8412B_0-3$ and $B_0>343$.
For the derivatives of $\log(1+R)$, the Cauchy circle of radius
$1/25$ about any point of this local disk lies within distance
$3/200+1/40+1/25=2/25$ of $u_k$.  Since $u_k>1$, it is contained in
$\{\Rea u>0.92,\ |\Ima u|<0.08\}$ and hence in the certified
$(0.3,0.1)$ kernel tube.  These inclusions are checked directly, rather
than inferred from the scalar bounds, by the certificate function
\texttt{certify\_nested\_}\allowbreak\texttt{saddle\_geometry}.

\emph{Lower bound.}  Since $\sigma_k^2=u_k/k=1/H_0\ge1/B_0$,
\[
\Rea A\ \ge\ 8\,\Rea B\cdot\sigma_k^2-\text{(tail)}
\ \ge\ 8e^{-4/25}\cos(4/25)-\varepsilon
\ >\ 6.730\ >\ 6,
\]
where the discarded terms $2\Rea(z/u^2)\sigma_k^2$ and
$48\Rea\bigl(B/(B-3)^2\bigr)\sigma_k^2$ are nonnegative by the sector
bounds of Lemma~\ref{lem:concavity}, and the kernel tail $\varepsilon$ is
the certified Cauchy loss $<3\cdot10^{-11}$; in fact the displayed lower
bound is $>6.73007678$.  \emph{Upper bound:} collecting the three
second-derivative components,
\[
\begin{aligned}
|A|\ \le\ &
\underbrace{\frac{2\cdot\tfrac{21}{20}}{(0.96)^2}}_{2z/u^2}
+\underbrace{8\cdot\tfrac54\cdot\tfrac{25}{21}}_{(-B/2)''}\\
&+\underbrace{\frac{48\cdot\tfrac54}{(0.8324)^2B_0}
 \cdot\frac{25}{21\,B_0}}_{\text{resolvent}}
+\text{tail}\\
&<2.2787+11.9048+0.00088<14.1844<14.185<20 ,
\end{aligned}
\]
each term normalized by $\sigma_k^2=1/H_0$ via
$H_0\ge\tfrac{21}{25}B_0$ and $|z|\le\tfrac{21}{20}k=\tfrac{21}{20}H_0u_k$.
\emph{Third derivative:} componentwise in the normalization
$|\Psi'''|\sigma_k^3/\sqrt{u_k/k}$, the $2z\log u$ part contributes at
most $4\cdot\tfrac{21}{20}(0.96)^{-3}<4.75$, the $-B/2$ part at
most $32\cdot\tfrac54\cdot\tfrac{25}{21}<47.7$, and the resolvent and
kernel-tail parts less than $0.03$ combined; certified total
$52.381<100$.

\emph{Saddle action.}  Differentiating the stationarity identity
$\Psi_z'(u(z))=0$ in $z$ gives $u'=-2/(u\Psi_z'')$, and
$\partial_zS_0=(\partial_z\Psi_z)(u(z))=2\log u(z)$, whence
$S_0''=2u'/u=-4/(u^2\Psi_z'')$.  Numerically,
\[
|S_0''|=\frac{4\,\sigma_k^2}{|u|^2\,|A|}
\ \le\ \frac{4}{(0.96)^2u_k^2\,\Rea A}\cdot\frac{u_k}k
\ \le\ \frac{4}{(0.96)^2\cdot6.730}\cdot\frac1k
\ <\ \frac{0.645}k,
\]
using $|A|\ge\Rea A>6.730$ and $u_k\ge1$.  All constants are certified by
the corresponding routine in Table~\ref{tab:certificates}.
\end{proof}

\begin{lemma}[Zero-free disk and uniform factorization]\label{lem:zerofree}
For $k\ge10^9$ and $|z-k|\le0.05k$,
\[
I(z)=e^{\Psi_z(u_s)}\sqrt{\frac{2\pi}{-\Psi_z''(u_s)}}\,
\bigl(1+\varepsilon(z)\bigr),
\qquad |\varepsilon(z)|<0.018 .
\]
In particular $I(z)\neq0$ on the disk, and $f=\log a$ is analytic there.
(The framework is the classical one-saddle uniform expansion, cf.\
\cite{DLMF}; the content is the explicit uniform constants.)
\end{lemma}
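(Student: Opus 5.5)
The proof is a standard steepest-descent argument with explicit constants. Fix $k\ge10^9$ and $|z-k|\le0.05k$, and let $u_s=x_s+iy_s$ be the full-kernel saddle of Lemma~\ref{lem:fullkernel}. First deform the integration path $[0,\infty)$ of $I(z)$ to the path $[0,0.35]\cup[0.35,\,0.35+iy_s]\cup\{x+iy_s:\ x\ge0.35\}$. This is legitimate because the integrand $u^{2z}\Phi(u)$ is holomorphic on the tube of Lemma~\ref{lem:tube} and decays superexponentially as $\Rea u\to\infty$ with $|\Ima u|\le0.1$. On the horizontal line, write $u=u_s+t$ and split $e^{\Psi_z(u)}$ into a core $|t|\le\delta$ and tails, with $\delta=c\,\sigma_k$ for a fixed $c$, for instance $\delta=\sigma_k\cdot k^{1/10}$ capped at $1/40$, so that it stays in the disk of Lemma~\ref{lem:phase}.

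On the core, Taylor-expand $\Psi_z(u_s+t)=\Psi_z(u_s)-\tfrac12(-\Psi_z'')t^2+\rho(t)$, using $\Psi_z'(u_s)=0$. The cubic bound of Lemma~\ref{lem:phase} gives $|\rho(t)|\le\frac{100}{6}\sqrt{u_k/k}\,(|t|/\sigma_k)^3$. Rescale $t=\sigma_k s$. The core integral then equals $\sigma_k\int e^{-As^2/2}(1+O(|e^{\rho}-1|))\,ds$ with $\Rea A\ge6$ and $|A|\le20$. Compare it with the full complex Gaussian $\sigma_k\sqrt{2\pi/A}=\sqrt{2\pi/(-\Psi_z''(u_s))}$. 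The relative error is controlled by $\int|e^{\rho}-1|e^{-3s^2}\,ds\big/|\sqrt{2\pi/A}|$. Since $\sqrt{u_k/k}\le\sqrt{\log k/(4k)}<10^{-4}$ for $k\ge10^9$, this error is tiny, apart from the Gaussian truncation tail $e^{-3(\delta/\sigma_k)^2}$ at $|s|=\delta/\sigma_k$. Everything reduces to certified scalar integrals.

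On the outer horizontal tails $|x-x_s|\ge\delta$ with $x\ge0.35$, Lemma~\ref{lem:concavity} gives $|e^{\Psi_z}|\le e^{\Rea\Psi_z(u_s)}e^{-\frac45\frac{k}{u_k^2}(x-x_s)^2}$. Because $k/u_k^2=\sigma_k^{-2}/u_k$, we need $\delta$ of order $\sigma_k\sqrt{u_k}$ times a constant to get exponential smallness. I would therefore pick $\delta=\min(1/40,\,C\sigma_k\sqrt{u_k})$ with $C$ around $3$ or $4$ and check that $\delta\le1/40$, which holds since $\sigma_k\sqrt{u_k}=u_k/\sqrt k$ is tiny. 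The cubic remainder on the core then stays small because $(\delta/\sigma_k)^3\sqrt{u_k/k}\sim C^3u_k^{3/2}\sqrt{u_k/k}\to0$. The Gaussian comparison uses $\Rea A\ge6$ and also requires $|\arg A|$ bounded, which follows from $\Rea A\ge6$ and $|A|\le20$; this makes $|\sqrt{2\pi/A}|$ comparable to the real Gaussian mass.

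The remaining pieces are the segment $[0,0.35]$ and the short vertical segment. There, $|u^{2z}|\le(0.36)^{2\Rea z}e^{2|\Ima z|\cdot|\arg u|}$, which is exponentially smaller than $|e^{\Psi_z(u_s)}|\approx|u_s|^{2\Rea z}\cdot\Phi$, with $|u_s|>1$. Care is needed with $|\Ima z|\le0.05k$ against the phase factor $e^{-2\Ima z\arg u}$. On the vertical segment $|\arg u|\le\arctan(3/70)$. Near $0$ the integral is taken on the real axis, so $\arg u=0$ and the factor is harmless. The $\Phi$ size mismatch is also polynomial at worst, so the pieces are beaten by $(0.36/|u_s|)^{1.9k}$.

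The main obstacle is the accounting of the relative error. Summing the core Taylor error, the truncation tails and the near-origin contribution must give below $0.018$ uniformly. In particular the Gaussian lower bound $|\sqrt{2\pi/A}|\ge\sqrt{2\pi/20}$ must dominate the concavity tails, whose decay rate $\frac45k/u_k^2$ in the variable $x$ corresponds to rate $\frac45/u_k$ in $s$, weaker than $\Rea A/2$. I would handle this by choosing $\delta$ with $\delta^2k/u_k^2\ge$ a certified constant, and certifying each of the three pieces in Arb. Zero-freeness of $I$ then follows since $|\varepsilon|<1$, and analyticity of $f$ follows because $\Gamma(2z+1)$ is zero-free there.
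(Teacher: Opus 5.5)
Your architecture matches the paper's: deform through the full-kernel saddle, use a complex Gaussian core controlled by the cubic bound of Lemma~\ref{lem:phase}, concavity for the tails, and an exponentially small connector. However, the tail step as you set it up is not uniform in $k$. You hand over from local Gaussian control to the envelope of Lemma~\ref{lem:concavity} at $\delta=C\sigma_k\sqrt{u_k}$, i.e.\ with $\delta^2k/u_k^2=C^2$ fixed. Relative to the main term, of size $\asymp\sigma_k\,|e^{\Psi_z(u_s)}|$, that tail is governed by
\[
\frac1{\sigma_k}\int_\delta^\infty e^{-\frac45\frac{k}{u_k^2}t^2}\,dt
=\sqrt{u_k}\int_{C}^\infty e^{-\frac45v^2}\,dv ,
\qquad C=\frac{\delta\sqrt k}{u_k}.
\]
This grows like $\sqrt{u_k}\sim\tfrac12\sqrt{\log k}$. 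So no fixed certified $C$ gives $|\varepsilon|<0.018$ for all $k\ge10^9$, and the theorem needs arbitrarily large $k$. With $\delta=c\,\sigma_k$ the tail is not even small. Your first suggestion, $\delta=\sigma_kk^{1/10}$, would have survived, but you dropped it.

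What you are missing is that the Gaussian control need not stop at scale $\sigma_k\sqrt{u_k}$. Lemma~\ref{lem:phase} gives the \emph{pointwise} bound $-\Rea\Psi_z''\,\sigma_k^2\ge6$ on the whole disk $|u-u_s|\le\tfrac1{40}$. Equivalently, your own cubic remainder satisfies $|\rho|\le\frac{100}{6}\sigma_k\cdot\frac{1}{40\sigma_k}\,s^2=\frac5{12}s^2$ there, which is dominated by $\tfrac12\Rea A\,s^2\ge3s^2$. Hence the integrand decays at least like $e^{-5y^2/2}$ in the rescaled variable all the way out to $|x-x_s|=\tfrac1{40}$, at a rate independent of $k$. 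This is what the paper does: Taylor expansion on $|y|\le2$, the pointwise curvature bound out to $\tfrac1{40}$, and Lemma~\ref{lem:concavity} only beyond $\tfrac1{40}$. There the exponent $\tfrac45\frac{k}{u_k^2}\bigl(\tfrac1{40}\bigr)^2\ge\frac{k}{125\log^2k}$ makes the tail $<10^{-100}$.

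Your connector estimate also rests on a false claim: the mismatch in $\Phi$ is not polynomial. At the saddle, $|\Phi(u_s)|\approx e^{-\Rea B(u_s)/2}$ with $B_0=2\pi e^{4u_k}\approx k/u_k$, so the main term carries a factor $e^{-\Theta(k/u_k)}$. The connector is still negligible: there $|u^{2z}|\le e^{-1.9k}$, while $\Rea B(u_s)/2\le\frac{125}{168}\frac{k}{u_k}$ and $|u_s|\ge\frac{19}{20}$. But this comparison uses $u_k\ge1$ and must be made explicitly; the paper records it as $\frac1k\log|e^{\Psi_z(u_s)}|>-0.9$. Finally, $\Gamma$ never vanishes; what makes $a=I/\Gamma(2z+1)$ zero-free is that $\Gamma(2z+1)$ has no poles on the disk.
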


\begin{proof}
We use the principal branch of $\log u$ in the right half-plane.  The
theta series for $\Phi$ converges normally on compact subsets of
$|\Ima u|<\pi/8$ (there $\Rea(e^{4u})>0$), so $u^{2z}\Phi(u)$ is
holomorphic in the swept region; zeros of $\Phi$ are not singularities.
For fixed $z$, deform $[0,\infty)$ to $C_z$: the radial segment from $0$
to $0.35+iy_s$, then the ray $x+iy_s$, $x\ge0.35$.  Truncate at radii
$\eta$ and $R$.  Uniformly on $|z-k|\le k/20$, the origin arc is
$O(\eta^{2\Rea z+1})=O(\eta^{1.9k+1})$, while on the closing segment
$R+it$ one has $|t|\le3/200$ and
$|(R+it)^{2z}\Phi(R+it)|
\le\exp(O_k(R)-\pi e^{4R}\cos(3/50))$.  Both vanish in the respective
limits, so Cauchy's theorem gives the integral over $C_z$ exactly
throughout the full $z$-disk.

By Lemma~\ref{lem:tube}, $\Phi$ is zero-free along the horizontal part.
In the window $|x-x_s|\le2\sigma_k$, Taylor expansion with the bounds of
Lemma~\ref{lem:phase} gives
$|\Psi_z(u_s+\sigma_ky)-\Psi_z(u_s)+\tfrac A2y^2|<0.01$ for $|y|\le2$
(using $u_k<\tfrac{\log k}4$, so
$100\sqrt{u_k/k}\cdot\tfrac{2^3}6<0.01$ at $k\ge10^9$; the window
$|u-u_s|\le2\sigma_k<\tfrac1{40}$ lies inside the certified disk by the
inclusion of Lemma~\ref{lem:phase}).  The complete complex Gaussian obeys,
with the certified $|A|\le14.185$,
$\bigl|\int_{\mathbb R}e^{-Ay^2/2}dy\bigr|=\sqrt{2\pi}/\sqrt{|A|}
\ge\sqrt{2\pi/14.185}>0.665$; its tails beyond $|y|=2$ are below
$e^{-12}/6$ (as $\Rea A\ge6$), and replacing the Gaussian by the true
phase on the window costs less than $(e^{0.01}-1)\sqrt{\pi/3}<0.0103$.
Outside the window, the pointwise curvature bound of
Lemma~\ref{lem:phase} gives $-\Rea\Psi_z''\ge6/\sigma_k^2>5/\sigma_k^2$
on $|x-x_s|\le\tfrac1{40}$ (contained in the certified disk), and beyond
$\tfrac1{40}$ the global envelope of Lemma~\ref{lem:concavity} applies.
In the normalization $|e^{\Psi_z(u_s)}|\sigma_k$, the first region is
bounded by
\[
 2\int_2^\infty e^{-5y^2/2}\,dy
 \le \frac{2e^{-10}}{10}<10^{-5}.
\]
For the second region put $\delta=1/40$ and
$b=(4/5)k/u_k^2$.  Since $\sigma_k^{-1}\le\sqrt{k}$ and
$u_k<\log(k)/4$, Mills' bound gives
\[
 \frac2{\sigma_k}\int_\delta^\infty e^{-bt^2}\,dt
 \le \frac{\sqrt{k}}{b\delta}e^{-b\delta^2}<10^{-100}
 \qquad(k\ge10^9).
\]
The last scalar inequality, with a much larger margin, is enclosed by
\texttt{certify\_contour\_}\allowbreak\texttt{completion}.  Thus the full
horizontal tail is below $10^{-4}$.

It remains to control the radial connector.  There $|u|\le0.36$, while
$\Rea z\ge\tfrac{19}{20}k$ and $|\Ima z|\le\tfrac k{20}$, so
\[
|u^{2z}|=|u|^{2\Rea z}e^{-2\Ima z\,\arg u}
\ \le\ e^{-\tfrac{19}{10}k\,|\log0.36|}\;e^{\tfrac k{10}\arctan\tfrac17}
\ \le\ e^{-1.94k}\,e^{0.015k}\ \le\ e^{-1.9k},
\]
and the elementary series bound in Table~\ref{tab:certificates} gives
$\sup_{C_z\cap\{|u|\le0.36\}}|\Phi(u)|<24$.

For comparison with the saddle, $|u_s|\ge19/20$ and
$|\arg u_s|\le\arctan(1/19)$.  Moreover
$|B(u_s)|\le\tfrac54B_0$, while
$H_0=k/u_k\ge\tfrac{21}{25}B_0$ and $u_k\ge1$; hence
$\Rea B(u_s)/2\le125k/168$.  From
\[
 |\Phi(u_s)|
 \ge \pi e^{5\Rea u_s}|B(u_s)-3|e^{-\Rea B(u_s)/2}
       (1-\|R\|_\infty)
\]
(whose positive prefactor exceeds $1$ on the saddle tube), we obtain
\[
\begin{split}
\frac1k\log|e^{\Psi_z(u_s)}|
&\ge
2\frac{21}{20}\log\frac{19}{20}
-\frac1{10}\arctan\frac1{19}
-\frac{125}{168}
+\frac1k\log(1-\|R\|_\infty)\\
&>-0.9 .
\end{split}
\]
Consequently the connector, relative to the Gaussian main term, is at
most
\[
 \frac{0.36\cdot24}{0.665}\sqrt{k}\,e^{-k}<10^{-100}
 \qquad(k\ge10^9).
\]

Combining the local error, Gaussian tail, horizontal tail and connector
gives relative error $<0.0158<0.018$.  The saddle map $u_s(z)$ is
analytic by Lemma~\ref{lem:fullkernel}, and
$-\Psi_z''(u_s)$ stays in the right half-plane by
Lemma~\ref{lem:phase}; choosing its principal square root makes the
Gaussian factor analytic.  Defining $\varepsilon(z)$ by the displayed
factorization therefore gives an analytic function with
$|\varepsilon(z)|<0.018$.  In particular $1+\varepsilon$ never vanishes,
so neither does $I$.  The numerical margins are certified by
\texttt{certify\_local\_gaussian\_lower\_bound} and
\texttt{certify\_contour\_completion} (central lower bound $0.6606$,
relative error $0.00734$, horizontal tail $<10^{-4}$, connector
$<10^{-100}$).
\end{proof}

\begin{proposition}[All-degree coefficient bound]\label{prop:cd}
For every $k\ge10^9$ and every $d\ge3$,
\[
\Bigl|\frac{f^{(d)}(k)}{d!}\Bigr|\ \le\ 3\cdot40^{d}\,k^{1-d},
\qquad\text{hence}\qquad
\Bigl|\frac{f^{(d)}(k)/d!}{\tau_k^{\,d-1}}\Bigr|\ \le\ 2\cdot80^{d}
\]
by Lemma~\ref{lem:tau}.
\end{proposition}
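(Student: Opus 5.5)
The plan is to write $f$ on the disk $|z-k|\le0.05k$ as an explicit saddle-point main part plus a bounded correction, and then read off Taylor coefficients at $z=k$ by Cauchy estimates. The abstract says a naive Cauchy bound on $f$ itself loses a $\log k$, so the estimate should be applied to $f''$.

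By Lemma~\ref{lem:zerofree},
\[
f(z)=\Psi_z(u_s)+\tfrac12\log\frac{2\pi}{-\Psi_z''(u_s)}+\log(1+\varepsilon(z))-\log\Gamma(2z+1),
\]
with every term analytic on the disk. Here $\Psi_z(u_s)=S_0(z)$ is the saddle action.

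\textbf{Step 1: bound $f''$ on a disk.} Differentiate each term twice and bound it on the smaller disk $|z-k|\le0.025k$. This leaves room for Cauchy estimates on the correction terms.

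\emph{Saddle action.} Lemma~\ref{lem:phase} gives $|S_0''|<0.645/k$ directly.

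\emph{Gamma term.} Use $(\log\Gamma(2z+1))''=4\psi'(2z+1)$ together with the standard bound $|\psi'(w)|\le 1/|w|+1/|w|^2$ for $\Rea w>0$. This gives roughly $2.2/k$.

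\emph{Correction term.} $\log(1+\varepsilon)$ is bounded by about $0.019$ on the full disk. A Cauchy estimate with radius $0.025k$ gives $|(\log(1+\varepsilon))''|\le 2\cdot0.019/(0.025k)^2$, which is $O(k^{-2})$ and negligible.

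\emph{Curvature term.} By Lemma~\ref{lem:phase}, $-\Psi_z''(u_s)\sigma_k^2=A(z)$ lies in $\{\Rea A\ge6,\ |A|\le20\}$. So $\log A(z)$ is bounded, roughly $|\log A|\le\log 20+\pi/2$. The constant $\log\sigma_k^2$ has zero derivative. A Cauchy estimate on radius $0.025k$ therefore bounds the second derivative by $O(k^{-2})$.

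Together these give $|f''(z)|\le M/k$ on $|z-k|\le0.025k$, with $M$ about $3$.

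\textbf{Step 2: extract the coefficients.} For $d\ge3$,
\[
\frac{f^{(d)}(k)}{d!}=\frac{(d-2)!}{d!}\cdot\frac{(f'')^{(d-2)}(k)}{(d-2)!}.
\]
Apply Cauchy's estimate to $f''$ on radius $\rho=k/40$:
\[
\Bigl|\frac{f^{(d)}(k)}{d!}\Bigr|\le\frac{1}{d(d-1)}\cdot\frac{M}{k}\Bigl(\frac{40}{k}\Bigr)^{d-2}
=\frac{M}{d(d-1)\,1600}\,40^{d}k^{1-d}.
\]
This is far below $3\cdot40^dk^{1-d}$, so the constant $3$ has ample room. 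That room can absorb a cruder bound on $M$, for instance using the full radius with $|S_0''|<5/k$.

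\textbf{Step 3: the $\tau_k$ form.} Lemma~\ref{lem:tau} gives $\tau_k>1/(2k)$, so $k^{1-d}<(2\tau_k)^{d-1}$. Then
\[
3\cdot40^dk^{1-d}\le3\cdot2^{d-1}40^d\tau_k^{d-1}=\tfrac32\,80^d\tau_k^{d-1}\le2\cdot80^d\tau_k^{d-1}.
\]

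\textbf{Main obstacle.} The delicate point is Step 1 for the terms that are only known to be bounded, namely $\log A$ and $\log(1+\varepsilon)$. Each needs a Cauchy estimate for its second derivative on a circle that stays inside the disk where Lemmas~\ref{lem:phase} and~\ref{lem:zerofree} hold. Nesting radii $0.05k\to0.025k$ handles this. One must also confirm that the principal branches of $\log A$ and $\log(1+\varepsilon)$ are analytic, which follows from $\Rea A>0$ and $|\varepsilon|<1$.
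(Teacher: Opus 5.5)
Your argument is correct, and it reaches the bound by different bookkeeping from the paper's. Both rest on the same decomposition of $f$ from Lemma~\ref{lem:zerofree}, and both use the same device of applying Cauchy's estimate to a second derivative. The paper, however, uses that device only for the saddle action $S_0$ and handles the other pieces term by term:
\begin{itemize}
\item It applies Cauchy directly to $\log(-\Psi_z''(u_s))$, bounding it crudely by $\log(20k)+\pi/2<k$. This contributes $\tfrac12\cdot40^dk^{1-d}$, which dominates the paper's final constant.
\item It applies Cauchy directly to $\log(1+\varepsilon)$.
\item It bounds the Gamma contribution through real polygamma estimates at $2k+1$, so it never needs $\psi'$ off the real axis.
\end{itemize}

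You instead bound all of $f''$ on the half-disk $|z-k|\le k/40$ and do a single Cauchy estimate. The key observation is that $\log(-\Psi_z''(u_s))=\log A(z)-\log\sigma_k^2$ with $\log\sigma_k^2$ independent of $z$. This makes the curvature term contribute only $O(k^{-2})$ to $f''$. As a result, your constant at $d=3$ is about $M/9600\approx3\cdot10^{-4}$, rather than the paper's $\approx0.5$.

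The price you pay is the nested radii $0.05k\to0.025k$ and a complex bound on $\psi'(2z+1)$. For the latter, the cleanest justification is termwise:
\[
|\psi'(w)|\le\sum_{n\ge0}|w+n|^{-2}\le\psi'(\Rea w).
\]
This avoids relying on the $|w|$-form of the bound throughout the right half-plane, which you asserted without proof. It is harmless here anyway, since $\arg(2z+1)$ is tiny on your disk.
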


\begin{proof}
Write, on $|z-k|\le0.05k$,
\[
f(z)=S_0(z)-\tfrac12\log\bigl(-\Psi_z''(u(z))\bigr)+\log(1+\varepsilon(z))
+\tfrac12\log2\pi-\log\Gamma(2z+1),
\]
by Lemma~\ref{lem:zerofree}.  We bound the $d$-th Taylor coefficient of
each term at $z=k$ separately, \emph{without} applying Cauchy's estimate
to $f$ itself (which would lose a factor $\log k$).

(i) \emph{Saddle action.}  $S_0''$ is analytic on the disk with
$|S_0''|<\tfrac5k$ (Lemma~\ref{lem:phase}); Cauchy's estimate on the
subdisk of radius $\tfrac k{40}$ gives, for $d\ge3$,
\[
\Bigl|\frac{S_0^{(d)}(k)}{d!}\Bigr|
=\Bigl|\frac{(S_0'')^{(d-2)}(k)}{d!}\Bigr|
\le\frac{(d-2)!}{d!}\cdot\frac5k\Bigl(\frac{40}k\Bigr)^{d-2}
=\frac{5\cdot40^{d-2}}{d(d-1)}\,k^{1-d}.
\]

(ii) \emph{Curvature logarithm.}  The factor $-\Psi_z''(u(z))$ lies in
the right half-plane with modulus in $[6/\sigma_k^2,\,20/\sigma_k^2]$
(Lemma~\ref{lem:phase}), so its principal logarithm is analytic with
modulus at most $\log(20k)+\tfrac\pi2<k$ on the disk; Cauchy on radius
$\tfrac k{40}$ gives coefficients at most $40^dk^{1-d}\cdot\tfrac12$
after the prefactor $\tfrac12$.

(iii) \emph{Analytic relative error.}  $|\log(1+\varepsilon)|<0.019$
(Lemma~\ref{lem:zerofree}); Cauchy gives coefficients below
$0.019\cdot40^dk^{-d}$.

(iv) \emph{Gamma factor.}  At the real center the polygamma series gives
directly, for $d\ge3$,
\[
\frac1{d!}\Bigl|\frac{d^d}{dz^d}\log\Gamma(2z+1)\Bigr|_{z=k}
\le\frac1{d\,k^d}+\frac2{d(d-1)}\,k^{1-d},
\]
from $\psi^{(d-1)}(x)$ bounds on $x\ge2k+1$.

Dividing the four contributions by $40^dk^{1-d}$ yields, at the worst
degree $d=3$, the certified componentwise values
\[
6.72\cdot10^{-5}\ \ (\text{i}),\qquad
0.5\ \ (\text{ii}),\qquad
7.6\cdot10^{-12}\ \ (\text{iii}),\qquad
5.3\cdot10^{-6}\ \ (\text{iv}),
\]
with total $<0.500073$; each normalized contribution is maximized at
$d=3$.  (Using only the rounded constant $5$ in the displayed estimate
for (i) gives $5.21\cdot10^{-4}$ and still leaves the total below
$0.5006$.)
Hence $|f^{(d)}(k)/d!|\le3\cdot40^dk^{1-d}$ with room to spare.  The
second display follows from $\tau_k>\tfrac1{2k}$:
$3\cdot40^dk^{1-d}\tau_k^{1-d}\le3\cdot40^d\cdot2^{d-1}=\tfrac32\cdot80^d
\le2\cdot80^d$.  Certified by \texttt{certify\_analytic\_degree\_tail}.
\end{proof}

\begin{remark}
The decisive point in (i) is that Cauchy's estimate is applied to the
\emph{second derivative} of the saddle action, which is $O(1/k)$ by the
exact envelope identity $S_0''=-4/(u^2\Psi_z'')$, rather than to
$S_0$ itself, which is of size $k\log k$.  A direct Cauchy estimate on
$f$ would leave a spurious factor $\log k$ and the resulting degree bound
would not be uniform in $k$.  Similarly the Gamma term is handled by the
exact polygamma series at the real center, not by a complex Stirling
remainder.
\end{remark}

\section{Gate B: the exact \texorpdfstring{$q$}{q}-Pascal dilation semigroup}\label{sec:gateB}

Fix $0<q<1$ and an order $r\ge2$; all matrices are $r\times r$, indexed
from $0$.  Let $V_{ij}=q^{ij}$ (symmetric $q$-Vandermonde).

\begin{lemma}[Exact $LDL^{T}$]\label{lem:ldl}
$V=LDL^{T}$ where $L_{ij}=\qbin ij_q$ is the unit lower-triangular
$q$-binomial (Gaussian) matrix and $D=\operatorname{diag}(D_0,\dots,D_{r-1})$
with
\[
D_m=\prod_{h=0}^{m-1}\bigl(q^m-q^h\bigr),
\qquad\operatorname{sgn}D_m=(-1)^m .
\]
\end{lemma}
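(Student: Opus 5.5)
The plan is to prove the entrywise identity
\[
q^{ij}=\sum_{m=0}^{r-1}\qbin im_q\,D_m\,\qbin jm_q ,\qquad 0\le i,j\le r-1,
\]
which is exactly $V=LDL^{T}$. It will come from a $q$-Newton interpolation formula for monomials in the basis $P_m(x)=\prod_{h=0}^{m-1}(x-q^h)$, evaluated at the nodes $x=q^j$. Since $\qbin im_q=0$ for $m>i$, the matrix $L$ is unit lower triangular. So once the identity holds, the factorization is just this identity read entrywise.

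\emph{Step 1 ($q$-Newton expansion).} I will show by induction on $n$ that
\[
x^n=\sum_{m=0}^{n}\qbin nm_q P_m(x).
\]
The case $n=0$ is trivial. For the inductive step, use the three-term relation $xP_m(x)=P_{m+1}(x)+q^mP_m(x)$, which holds because $x=(x-q^m)+q^m$. Multiply the expansion of $x^n$ by $x$ and collect the coefficient of $P_m$. It is $\qbin n{m-1}_q+q^m\qbin nm_q$, and by the standard $q$-Pascal rule this equals $\qbin{n+1}m_q$.

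\emph{Step 2 (evaluation at $x=q^j$).} For $m>j$ we have $P_m(q^j)=0$, because the factor with $h=j$ vanishes. For $m\le j$, write each factor as
\[
\frac{q^j-q^h}{q^m-q^h}=\frac{1-q^{j-h}}{1-q^{m-h}}.
\]
Taking the product over $h=0,\dots,m-1$ gives
\[
\frac{\prod_{h<m}(1-q^{j-h})}{\prod_{h<m}(1-q^{m-h})}=\qbin jm_q ,
\]
since the denominator runs over the same set as $\prod_{s=1}^{m}(1-q^{s})$. Hence $P_m(q^j)=\qbin jm_q D_m$, and this also holds for $m>j$ because both sides are then $0$. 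Substituting into Step 1 with $n=i$ and $x=q^j$ gives the claimed identity.

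\emph{Step 3 (signs).} For $0\le h<m$ and $0<q<1$, each factor $q^m-q^h$ is negative. Therefore $\operatorname{sgn}D_m=(-1)^m$.

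No step is a genuine obstacle. The only point needing care is the reindexing in Step 2 that matches the product with the Gaussian binomial. The whole identity is also a polynomial identity in $q$, so it can be verified in exact rational arithmetic, consistent with the paper's certification scheme.
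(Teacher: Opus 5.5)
Your proof is correct. It differs from the paper's in how the second triangular factor is obtained.

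The paper starts from the generic Newton-basis elimination for an arbitrary Vandermonde matrix $[x_i^{\,j}]$. There the lower factor is $\prod_{h<m}(x_i-x_h)/\prod_{h<m}(x_m-x_h)$ and the pivots are $\prod_{h<m}(x_m-x_h)$. Specializing to $x_i=q^i$ identifies this lower factor as the Gaussian binomials, via the same reindexing as your Step 2. The paper never computes the upper factor. It gets it as $L^{T}$ from the symmetry $q^{ij}=q^{ji}$ together with uniqueness of the unit-triangular $LDU$ factorization when the leading pivots are nonzero.

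You compute that upper factor explicitly. Your Step 1, $x^n=\sum_m\qbin nm_q\prod_{h<m}(x-q^h)$, says exactly that the Newton coefficients of $x^n$ at the nodes $1,q,q^2,\dots$ are $\qbin nm_q$. You prove it by the $q$-Pascal rule $\qbin{n+1}m_q=\qbin n{m-1}_q+q^m\qbin nm_q$, and then check $q^{ij}=\sum_m L_{im}D_mL_{jm}$ entry by entry.

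What your route buys is that it is fully self-contained. The $q$-Pascal induction replaces both the quoted general elimination formula and the uniqueness argument, and the symmetry of the factorization is visible directly. What the paper's route buys is brevity and generality: it works verbatim for any symmetric Vandermonde-type matrix with distinct nodes, at the cost of relying on those standard facts.
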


\begin{proof}
For a Vandermonde matrix $[x_i^{\,j}]$, Gaussian elimination in the
Newton basis gives the unit lower factor
\[
 L_{im}=
 \frac{\prod_{h=0}^{m-1}(x_i-x_h)}
      {\prod_{h=0}^{m-1}(x_m-x_h)}
 \qquad(i\ge m)
\]
and pivot $D_m=\prod_{h<m}(x_m-x_h)$.  At $x_i=q^i$,
\[
L_{im}
=\frac{\prod_{h=0}^{m-1}(q^i-q^h)}
        {\prod_{h=0}^{m-1}(q^m-q^h)}
=\frac{\prod_{t=i-m+1}^{i}(1-q^t)}
        {\prod_{t=1}^{m}(1-q^t)}
=\qbin im_q .
\]
Since $V$ is symmetric and all its leading pivots are nonzero, uniqueness
of unit-lower $LDL^T$ elimination gives $V=LDL^T$ with these factors.
Finally every factor $q^m-q^h$ in $D_m$ is negative for $h<m$ and
$0<q<1$, hence $\operatorname{sgn}D_m=(-1)^m$.  The ancillary routine
\texttt{q\_pascal\_ldl} verifies the same identities in exact rational
arithmetic as a regression check.
\end{proof}

\begin{lemma}[Exact bidiagonal dilation group]\label{lem:bidiag}
For $\alpha\in\mathbb R$ let
$R_\alpha:=L^{-1}\operatorname{diag}(q^{\alpha i})\,L$.  Then
$\{R_\alpha\}$ is a one-parameter group, $R_\alpha R_\beta=R_{\alpha+\beta}$,
$R_p=R_1^{\,p}$ for integer $p$, and $R_1$ is exactly lower bidiagonal:
\[
(R_1)_{mm}=q^m,\qquad
(R_1)_{m,m-1}=-q^{m-1}(1-q^m),\qquad
(R_1)_{mi}=0\ \ (i\notin\{m,m-1\}).
\]
After the signature whitening $S=L\,|D|^{1/2}$ (so $V=SJS^{T}$ with
$J=\operatorname{diag}((-1)^m)$), the whitened dilation
$\widehat R_\alpha=|D|^{-1/2}L^{-1}\operatorname{diag}(q^{\alpha i})L\,|D|^{1/2}$
is again a group, and
\[
(\widehat R_1)_{mm}=q^m,\qquad
(\widehat R_1)_{m,m-1}=-\sqrt{q^{m-1}(1-q^m)} .
\]
\end{lemma}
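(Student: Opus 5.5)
The plan is to reduce everything to two elementary facts: conjugation preserves the group law, and the Gaussian binomials satisfy a one-term absorption identity.

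\emph{Group law.} Since $\alpha\mapsto\operatorname{diag}(q^{\alpha i})$ is a one-parameter group of diagonal matrices, its conjugate $R_\alpha=L^{-1}\operatorname{diag}(q^{\alpha i})L$ is one too: $R_\alpha R_\beta=L^{-1}\operatorname{diag}(q^{(\alpha+\beta)i})L=R_{\alpha+\beta}$. The identity $R_p=R_1^{\,p}$ for integer $p$ follows by induction, using $R_0=I$ and $R_{-1}=R_1^{-1}$. The same argument applies verbatim to $\widehat R_\alpha$, which is a further conjugate of $R_\alpha$ by $|D|^{1/2}$. The signature factorization $V=SJS^{T}$ is immediate from Lemma~\ref{lem:ldl}: since $\operatorname{sgn}D_m=(-1)^m$, we can write $D=|D|^{1/2}J|D|^{1/2}$.

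\emph{Bidiagonality.} Rather than computing $L^{-1}$, I would verify the equivalent identity $\operatorname{diag}(q^i)\,L=L\,R_1$, with $R_1$ the claimed bidiagonal matrix. Entrywise, the $(i,m)$ entry of the right side is
\[
(LR_1)_{im}=q^m\qbin im_q-q^m(1-q^{m+1})\qbin i{m+1}_q ,
\]
while the left side is $q^i\qbin im_q$. Equality therefore reduces to
\[
(1-q^{m+1})\qbin i{m+1}_q=(1-q^{i-m})\qbin im_q ,
\]
because then the right side equals $q^m\bigl(1-(1-q^{i-m})\bigr)\qbin im_q=q^i\qbin im_q$. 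This absorption identity follows directly from the product formula $\qbin im_q=\prod_{t=i-m+1}^{i}(1-q^t)\big/\prod_{t=1}^{m}(1-q^t)$ used in the proof of Lemma~\ref{lem:ldl}. It also holds trivially when $i\le m$, where both sides vanish, and at the boundary $m=r-1$, where the $(m+1)$ term is absent and both sides vanish unless $i=m$. Because $L$ is invertible, the identity $\operatorname{diag}(q^i)L=LR_1$ determines $R_1$ uniquely, which gives the stated diagonal and subdiagonal entries and zeros elsewhere.

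\emph{Whitening.} Conjugating a lower bidiagonal matrix by the diagonal matrix $|D|^{1/2}$ keeps the diagonal and multiplies the $(m,m-1)$ entry by $|D_{m-1}|^{1/2}/|D_m|^{1/2}$. From Lemma~\ref{lem:ldl},
\[
|D_m|=\prod_{h<m}q^h(1-q^{m-h})=q^{m(m-1)/2}\prod_{t=1}^m(1-q^t),
\]
so $|D_m|/|D_{m-1}|=q^{m-1}(1-q^m)$. Hence the whitened subdiagonal entry is
\[
-\frac{q^{m-1}(1-q^m)}{\sqrt{q^{m-1}(1-q^m)}}=-\sqrt{q^{m-1}(1-q^m)} .
\]

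There is no real obstacle here. The only points needing care are the index bookkeeping in the absorption identity (including the boundary cases $i\le m$ and $m=r-1$) and making sure the whitening conjugation is applied in the correct direction, so that the ratio is $|D_{m-1}|/|D_m|$ and not its reciprocal. As with Lemma~\ref{lem:ldl}, these identities can also be confirmed in exact rational arithmetic as a regression check.
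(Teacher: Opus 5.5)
Your proposal is correct and follows essentially the same route as the paper: the group law comes from conjugating the diagonal group, $LR_1=\operatorname{diag}(q^i)L$ is checked entrywise via the $q$-binomial ratio identity together with invertibility of $L$, and the whitened subdiagonal follows from the pivot ratio $|D_m|/|D_{m-1}|=q^{m-1}(1-q^m)$. Your explicit handling of the boundary cases ($i\le m$, $m=r-1$) and of the direction of the whitening conjugation adds detail the paper leaves implicit, but the argument is the same.
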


\begin{proof}
The group law is immediate from
$\operatorname{diag}(q^{\alpha i})\operatorname{diag}(q^{\beta i})
=\operatorname{diag}(q^{(\alpha+\beta)i})$ by conjugation.  For the
bidiagonal formula it suffices to check $L\,R_1=\operatorname{diag}(q^m)\,L$
column by column: with the claimed entries,
\[
(LR_1)_{mi}=\qbin mi_q q^{i}-\qbin m{i+1}_q q^{i}\bigl(1-q^{i+1}\bigr),
\]
and the standard ratio
$\qbin m{i+1}_q\big/\qbin mi_q=\dfrac{1-q^{m-i}}{1-q^{i+1}}$ turns the
right side into
$\qbin mi_q\,q^i\bigl[1-(1-q^{m-i})\bigr]=q^{m}\qbin mi_q$, as required.
Since $L$ is invertible, this proves $R_1=L^{-1}\operatorname{diag}(q^i)L$
for the displayed bidiagonal matrix.  The identity is additionally verified as an exact identity of rational
numbers in \texttt{q\_pascal\_dilation\_step} against
\texttt{q\_pascal\_dilation}, together with $R_p=R_1^p$
(\texttt{q\_pascal\_dilation\_semigroup}).  The whitened subdiagonal
follows from $|D_m|/|D_{m-1}|=q^{m-1}(1-q^m)$, immediate from the pivot
product formula.
\end{proof}

\begin{lemma}[Generator norm]\label{lem:generator}
Let $\tau=-\log q$, $x=r\tau$, and suppose $x\le\tfrac1{64}$.  Then
\[
\|\widehat R_1-I\|_2\le x+\sqrt x<1,\qquad
\|G\|_2\le\frac{x+\sqrt x}{1-x-\sqrt x}\le\frac32\sqrt x,
\qquad G:=\log\widehat R_1,
\]
and for the affine generator including the scalar shift $e^{\tau\alpha(r-1)}$,
\[
\tau(r-1)+2\|G\|\ \le\ 4\sqrt{r\tau}.
\]
\end{lemma}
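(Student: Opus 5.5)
We split $\widehat R_1=\Delta+N$. Here $\Delta=\operatorname{diag}(q^m)_{m<r}$, and $N$ is the strictly lower (sub-diagonal) part with entries $N_{m,m-1}=-\sqrt{q^{m-1}(1-q^m)}$, as given by Lemma~\ref{lem:bidiag}. The triangle inequality then gives $\|\widehat R_1-I\|_2\le\|\Delta-I\|_2+\|N\|_2$, and we bound the two pieces separately.

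For the diagonal part, $\|\Delta-I\|_2=\max_m(1-q^m)=1-q^{r-1}\le(r-1)\tau\le x$, using $1-e^{-t}\le t$. For the subdiagonal part, $N$ is a weighted shift, so $\|N\|_2=\max_m|N_{m,m-1}|$. Each weight satisfies $q^{m-1}(1-q^m)\le 1-q^m\le m\tau\le(r-1)\tau\le x$, so $\|N\|_2\le\sqrt x$. Hence $\|\widehat R_1-I\|_2\le x+\sqrt x$. Under the hypothesis $x\le\tfrac1{64}$ this is at most $\tfrac1{64}+\tfrac18=\tfrac9{64}<1$.

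Next we define $G=\log\widehat R_1$ by the Mercator series $\sum_{n\ge1}(-1)^{n+1}(\widehat R_1-I)^n/n$. The series converges because $\|\widehat R_1-I\|<1$. Submultiplicativity gives $\|G\|_2\le-\log(1-\epsilon)\le\epsilon/(1-\epsilon)$ with $\epsilon=x+\sqrt x$. This $G$ is a genuine logarithm, since $\exp$ of the Mercator series is the identity as power series. It is also consistent with the group of Lemma~\ref{lem:bidiag}, because $\widehat R_\alpha=|D|^{-1/2}L^{-1}\exp(-\alpha\tau\operatorname{diag}(i))L|D|^{1/2}$ is an exponential of $\alpha$ times a fixed matrix. (Whether that matrix equals the principal $G$ need not be argued here.)

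For the bound $\epsilon/(1-\epsilon)\le\tfrac32\sqrt x$, write $s=\sqrt x\le\tfrac18$. The claim becomes $s+s^2\le\tfrac32 s(1-s-s^2)$, which is equivalent to $1+s\le\tfrac32(1-s-s^2)$, i.e.\ $\tfrac32s^2+\tfrac52s\le\tfrac12$. At $s=\tfrac18$ the left side is $\tfrac{3}{128}+\tfrac{5}{16}=\tfrac{43}{128}<\tfrac12$. Since the left side increases in $s$, the inequality holds on the whole range.

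Finally, $\tau(r-1)+2\|G\|\le x+3\sqrt x$. Because $\sqrt x\le\tfrac18$, we have $x=\sqrt x\cdot\sqrt x\le\tfrac18\sqrt x\le\sqrt x$. Therefore $x+3\sqrt x\le4\sqrt x=4\sqrt{r\tau}$.

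The only delicate point is the operator-norm estimate for the weighted shift $N$. We handle it by noting $N^TN$ is diagonal with entries the squared weights, so $\|N\|_2$ is exactly the largest weight. Everything else is elementary scalar inequalities.
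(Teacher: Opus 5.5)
Your proof is correct and follows the paper's argument step for step. It uses the same diagonal/subdiagonal split with the triangle inequality, the same entry bounds $1-q^m\le m\tau\le x$ and $q^{m-1}(1-q^m)\le m\tau\le x$, the same Mercator estimate $\epsilon/(1-\epsilon)$, and the same closing use of $x\le\tfrac18\sqrt x$. The differences are cosmetic: you justify $\|N\|_2=\max_m|N_{m,m-1}|$ explicitly via the diagonal matrix $N^{T}N$, which the paper leaves implicit, and you verify $\epsilon/(1-\epsilon)\le\tfrac32\sqrt x$ through the quadratic $\tfrac32s^2+\tfrac52s\le\tfrac12$ rather than by bounding the denominator below by $\tfrac{55}{64}$.
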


\begin{proof}
The diagonal part of $\widehat R_1-I$ has entries $q^m-1$ with
$|q^m-1|\le m\tau\le x$ for $m\le r-1$ (as $1-e^{-y}\le y$); the
subdiagonal entries are bounded by
$\sqrt{q^{m-1}(1-q^m)}\le\sqrt{m\tau}\le\sqrt x$.  A lower bidiagonal
matrix with diagonal sup $a$ and subdiagonal sup $b$ has spectral norm at
most $a+b$ (triangle inequality on the two diagonals), giving the first
display; $x+\sqrt x\le\tfrac1{64}+\tfrac18<1$.  The Mercator series
$\log(I+\Delta)=\sum_{n\ge1}(-1)^{n-1}\Delta^n/n$ converges for
$\|\Delta\|<1$ and gives $\|\log(I+\Delta)\|\le\|\Delta\|/(1-\|\Delta\|)$;
with $\|\Delta\|\le x+\sqrt x$, $x\le\tfrac1{64}$, and hence
$\sqrt x\le\tfrac18$,
\[
\frac{x+\sqrt x}{1-x-\sqrt x}
\ \le\ \frac{\sqrt x\,(1+\sqrt x)}{1-\tfrac1{64}-\tfrac18}
\ =\ \sqrt x\cdot\frac{1+\tfrac18}{\tfrac{55}{64}}
\ <\ \tfrac32\sqrt x
\]
(certified as a scalar inequality by
\texttt{certify\_q\_response\_regime}).  Finally
$\tau(r-1)\le r\tau=x\le\tfrac18\sqrt x$, so
$\tau(r-1)+2\|G\|\le\tfrac18\sqrt x+3\sqrt x<4\sqrt x$.
\end{proof}

\begin{proposition}[All-degree response bound]\label{prop:response}
Fix $r,q$ as above with $x=r\tau\le\tfrac1{64}$, and let $d:=r-1$,
$s=s(i,j):=d-i-j$.  For a polynomial source, define the whitened response
of $\phi$ as
\[
\mathcal R(\phi):=
\bigl\||D|^{-1/2}L^{-1}\,\bigl(V\circ\phi(s)\bigr)\,L^{-T}|D|^{-1/2}\bigr\|_2 ,
\]
where $V\circ\phi(s)$ multiplies $V$ entrywise by $\phi(s(i,j))$.  Then
with $t:=4\sqrt{r\tau}$,
\[
\mathcal R\bigl((\tau s)^n\bigr)\ \le\ t^{\,n}
\qquad\text{for every integer }n\ge0
\]
(the exponent $n$ is arbitrary; $d=r-1$ stays fixed in $s(i,j)$).
\end{proposition}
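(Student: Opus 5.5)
The plan is to realize the entrywise multiplier $(\tau s)^n$ as an $n$-th derivative of an exact two-sided dilation of $V$. Then Lemmas~\ref{lem:bidiag} and \ref{lem:generator} bound that derivative by a multinomial expansion, with no Cauchy-estimate loss of $n!$ or $e^n$.

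\emph{Step 1 (the dilation identity).} Since $s(i,j)=d-i-j$, for real $\alpha$ we have
\[
V_{ij}\,q^{\alpha s(i,j)}=q^{\alpha d}\,q^{-\alpha i}\,q^{ij}\,q^{-\alpha j}.
\]
In matrix form this reads
\[
V\circ q^{\alpha s}=q^{\alpha d}\,\operatorname{diag}(q^{-\alpha i})\,V\,\operatorname{diag}(q^{-\alpha j}).
\]
Insert $V=LDL^T=L|D|^{1/2}J|D|^{1/2}L^T$ from Lemma~\ref{lem:ldl} and conjugate by $|D|^{-1/2}L^{-1}$ on the left and $L^{-T}|D|^{-1/2}$ on the right. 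This gives
\[
F(\alpha):=|D|^{-1/2}L^{-1}\bigl(V\circ q^{\alpha s}\bigr)L^{-T}|D|^{-1/2}
=e^{-\alpha\tau d}\,\widehat R_{-\alpha}\,J\,\widehat R_{-\alpha}^{T},
\]
with $\widehat R_\alpha$ the whitened dilation of Lemma~\ref{lem:bidiag}. Since $q^{\alpha s}=e^{-\alpha\tau s}$, differentiating entrywise gives $V\circ(\tau s)^n=(-\partial_\alpha)^n(V\circ q^{\alpha s})|_{\alpha=0}$. The whitening is linear and independent of $\alpha$, so
\[
\mathcal R\bigl((\tau s)^n\bigr)=\bigl\|F^{(n)}(0)\bigr\|_2 .
\]

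\emph{Step 2 (exponential form of the group).} Here I must show that $\widehat R_{\alpha}=e^{\alpha G}$ with $G=\log\widehat R_1$, the Mercator logarithm of Lemma~\ref{lem:generator}. I expect this identification to be the main subtle point. Directly from the definition, $\widehat R_\alpha=e^{\alpha N}$ with
\[
N=-\tau\,|D|^{-1/2}L^{-1}\operatorname{diag}(i)\,L\,|D|^{1/2}.
\]
The matrix $N$ is diagonalizable with eigenvalues $-\tau i$, $0\le i\le r-1$. Hence $\widehat R_1=e^N$ is diagonalizable in the same basis with eigenvalues $q^i$. Each eigenvalue satisfies $|q^i-1|\le x<1$, and $\|\widehat R_1-I\|<1$ by Lemma~\ref{lem:generator}. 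The Mercator series therefore acts eigenvalue-wise and returns $\log q^i=-\tau i$, which gives $G=N$.

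\emph{Step 3 (multinomial bound).} With $A=-\tau d$, we now have $F(\alpha)=e^{\alpha A}e^{-\alpha G}Je^{-\alpha G^T}$. The three factors are entire in $\alpha$, so the Leibniz rule gives
\[
F^{(n)}(0)=\sum_{a+b+c=n}\binom{n}{a,b,c}A^a(-G)^bJ(-G^T)^c .
\]
Since $\|J\|_2=1$ and $\|G^T\|_2=\|G\|_2$, taking norms yields
\[
\|F^{(n)}(0)\|_2\le(\tau d+2\|G\|_2)^n=\bigl(\tau(r-1)+2\|G\|_2\bigr)^n .
\]
By the last display of Lemma~\ref{lem:generator} this is at most $(4\sqrt{r\tau})^n=t^n$. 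The case $n=0$ is the identity $\|J\|=1$. Note that the scalar and the two one-sided group actions combine additively in the base of the $n$-th power, which is why no factorial growth appears.
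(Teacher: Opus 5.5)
Your proposal is correct and follows the paper's proof essentially step for step. The paper likewise factors the exponential source as $e^{\tau\alpha d}\operatorname{diag}(q^{\alpha i})V\operatorname{diag}(q^{\alpha j})$, identifies the whitened family as $e^{\tau\alpha d}e^{\alpha G}Je^{\alpha G^{T}}$ (noting that the Mercator series commutes with conjugation by $S=L|D|^{1/2}$), and bounds the $n$-th derivative by the trinomial expansion $(\tau d+2\|G\|)^n\le t^n$ via Lemma~\ref{lem:generator}; your sign convention $q^{\alpha s}=e^{-\alpha\tau s}$, which produces $\widehat R_{-\alpha}$ and $(-\partial_\alpha)^n$, is the only difference and is immaterial.
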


\begin{proof}
Consider the exponential source $E_\alpha(i,j)=q^{ij}e^{\tau\alpha s}$.
Since
\[
\begin{aligned}
e^{\tau\alpha s}
 &=e^{\tau\alpha(d-i-j)}
  =e^{\tau\alpha d}q^{\alpha i}q^{\alpha j},\\
E_\alpha
 &=e^{\tau\alpha d}\,
   \operatorname{diag}(q^{\alpha i})\,V\,
   \operatorname{diag}(q^{\alpha j}).
\end{aligned}
\]
The second identity is an exact factorization.  Moreover
\[
\widehat R_\alpha
:=S^{-1}\operatorname{diag}(q^{\alpha i})S=e^{\alpha G}
\qquad(\alpha\in\mathbb R).
\]
Indeed, the Mercator series defining $G=\log\widehat R_1$ commutes with
conjugation by $S$.  The diagonal entries $q^i-1$ lie in $(-1,0]$, so
\[
G=S^{-1}\operatorname{diag}(\log q^{i})S
 =S^{-1}\operatorname{diag}(-\tau i)S ,
\qquad
e^{\alpha G}=S^{-1}\operatorname{diag}(q^{\alpha i})S .
\]
Hence, using $V=SJS^{T}$,
\[
\begin{aligned}
\mathcal M(\alpha)
&:=|D|^{-1/2}L^{-1}E_\alpha L^{-T}|D|^{-1/2}\\
&=e^{\tau\alpha d}\,e^{\alpha G}J e^{\alpha G^{T}} .
\end{aligned}
\]
Differentiating $n$ times at $\alpha=0$ and expanding the trinomial
(with $\|J\|_2=1$),
\[
\|\mathcal M^{(n)}(0)\|
\le\sum_{i+j+l=n}\binom{n}{i,j,l}(\tau d)^i\|G\|^{j}\|G\|^{l}
=\bigl(\tau d+2\|G\|\bigr)^n\le t^{\,n}
\]
by Lemma~\ref{lem:generator} (note $\tau d\le\tau(r-1)$).  On the other
hand $\partial_\alpha^nE_\alpha|_{\alpha=0}=(\tau s)^nq^{ij}$ entrywise,
and the whitening transform is linear in the source, so
$\mathcal M^{(n)}(0)$ is exactly the whitened matrix of
$V\circ(\tau s)^n$.
\end{proof}

\section{Gate C: the weighted Banach algebra and the nonlinear majorant}
\label{sec:gateC}

Fix $k\ge10^9$ and $r\ge2$, put $q=q_k$, $\tau=\tau_k$, $d=r-1$,
$\rho=a_{k+1}/a_k$, and define the comparison model
$c_s=q^{s(s-1)/2}$ and the remainder $h_s$ by
\begin{equation}\label{eq:h}
\frac{a_{k+s}}{a_k\,\rho^{\,s}}=c_s\,e^{h_s},
\qquad\text{i.e.}\qquad
h_s=f(k+s)-f(k)-s\bigl(f(k+1)-f(k)\bigr)+\tau\binom s2 ,
\end{equation}
so that $h_{-1}=h_0=h_1=0$ identically.

\begin{lemma}[Centered expansion]\label{lem:centered}
For $|s|\le r-1$ the Taylor series of $f$ at $k$ converges to $f(k+s)$,
and
\[
h_s=\sum_{d'\ge3}a_{d'}\,P_{d'}(s),
\qquad a_{d'}=\frac{f^{(d')}(k)}{d'!},\qquad
P_{d'}(s)=\begin{cases}
s^{d'}-s,& d'\text{ odd},\\[2pt]
s^{d'}-s^2,& d'\text{ even}.
\end{cases}
\]
\end{lemma}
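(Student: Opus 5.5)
Two things are needed: convergence of the Taylor series of $f$ at $k$ on the points $k+s$, and an algebraic rearrangement of \eqref{eq:h}.

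\emph{Convergence.} By Lemma~\ref{lem:zerofree}, $f=\log a$ is analytic on the disk $|z-k|\le0.05k$ for $k\ge10^9$. In the regime where this lemma is applied, $k\ge10^{18}r^3$, so $r-1\le k^{1/3}\ll0.05k$. Every point $k+s$ with $|s|\le r-1$ therefore lies strictly inside the disk of analyticity, and the Taylor series $f(k+s)=\sum_{d'\ge0}a_{d'}s^{d'}$ converges absolutely there. Proposition~\ref{prop:cd} also makes this explicit: $|a_{d'}s^{d'}|\le3k(40|s|/k)^{d'}$, which is a geometric series with ratio at most $40(r-1)/k<1$. If the lemma is meant for general $k\ge10^9$, one adds the implicit standing assumption $r-1<0.05k$ (or $40(r-1)<k$), which the regime of Theorem~\ref{thm:main} supplies.

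\emph{Rearrangement.} Substitute the series into
\[
h_s=f(k+s)-f(k)-s\bigl(f(k+1)-f(k)\bigr)+\tau\tbinom s2 .
\]
The constant term $a_0$ cancels. The $s$-linear terms give
\[
a_1s-s\Bigl(a_1+a_2+\sum_{d'\ge3}a_{d'}\Bigr),
\]
using $f(k+1)-f(k)=\sum_{d'\ge1}a_{d'}$, which is legitimate because $1\le r-1$ lies in the convergence range.

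Next identify $\tau$ via $\tau=-\log q_k=2f(k)-f(k-1)-f(k+1)=-2\sum_{d'\ \mathrm{even}\ge2}a_{d'}$. Since $\binom s2=(s^2-s)/2$, the term $\tau\binom s2$ equals $-(s^2-s)\sum_{\text{even }d'\ge2}a_{d'}$.

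Now collect the coefficient of each $a_{d'}$:
\begin{itemize}
\item $d'=1$: $s-s=0$.
\item $d'=2$: $s^2-s-(s^2-s)=0$.
\item Odd $d'\ge3$: $s^{d'}-s$.
\item Even $d'\ge4$: $s^{d'}-s-(s^2-s)=s^{d'}-s^2$.
\end{itemize}
This is exactly $P_{d'}$. All rearrangements are justified by absolute convergence, since each series is dominated by $\sum|a_{d'}|(r-1)^{d'}<\infty$.

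As a sanity check, $P_{d'}(s)$ vanishes at $s=-1,0,1$, consistent with $h_{-1}=h_0=h_1=0$.

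The only delicate point is the domain. One must make sure that $k+s$ stays in the zero-free disk, and that the lemma is invoked with $40(r-1)<k$, which the regime $k\ge10^{18}r^3$ of Theorem~\ref{thm:main} guarantees.
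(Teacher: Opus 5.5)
Your proof is correct and matches the paper's argument: you get convergence of the Taylor series at the points $k+s$ from the zero-free disk of Lemma~\ref{lem:zerofree} and the coefficient bounds of Proposition~\ref{prop:cd}, using $r-1\le k^{1/3}$ in the regime $k\ge10^{18}r^{3}$, and you then substitute the series and use $\tau=-2\sum_{d'\ge2\ \mathrm{even}}a_{d'}$ to cancel degrees $1$ and $2$, leaving $P_{d'}$. The only cosmetic difference is that you get convergence to $f$ directly from holomorphy on the disk of radius $0.05k$, whereas the paper uses the coefficient bound on radius $k/80$ plus a Cauchy remainder; these amount to the same thing.
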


\begin{proof}
By Proposition~\ref{prop:cd}, $|a_{d'}|\le3\cdot40^{d'}k^{1-d'}$, so the
Taylor series at $k$ converges on $|z-k|\le\tfrac k{80}$, with remainder
bounds forcing it to converge \emph{to} $f$ (standard Cauchy remainder
on the zero-free disk of Lemma~\ref{lem:zerofree}); since
$r-1\le k^{1/3}\ll\tfrac k{80}$ in our regime, the expansion applies.
Substituting $f(k+s)=f(k)+\sum_{d'\ge1}a_{d'}s^{d'}$ into \eqref{eq:h}
and using
$\tau=-\bigl(f(k+1)+f(k-1)-2f(k)\bigr)=-2\sum_{d'\ge2\ \mathrm{even}}a_{d'}$
one checks that the degree-$1$ terms cancel, the degree-$2$ term cancels
\emph{identically}
($a_2s^2-a_2s-a_2\,s(s-1)=0$), odd degrees $d'\ge3$ contribute
$a_{d'}(s^{d'}-s)$, and even degrees $d'\ge4$ contribute
$a_{d'}(s^{d'}-s-s(s-1))=a_{d'}(s^{d'}-s^2)$.
\end{proof}

\begin{definition}[Weighted algebra]\label{def:algebra}
Let $\mathcal A$ be the space of power series
$\phi(s)=\sum_{n\ge0}\phi_n s^n$ with finite norm
\[
\|\phi\|_{\mathcal A}:=\sum_{n\ge0}|\phi_n|\,w^n\ <\ \infty,
\qquad w:=\frac t\tau,\quad t=4\sqrt{r\tau}.
\]
Since the weight is multiplicative ($w^{n+m}=w^nw^m$), $\mathcal A$ is a
commutative Banach algebra ($\ell^1$ with submultiplicative weight):
$\|\phi\psi\|_{\mathcal A}\le\|\phi\|_{\mathcal A}\|\psi\|_{\mathcal A}$,
and consequently $e^{\phi}-1\in\mathcal A$ with
$\|e^{\phi}-1\|_{\mathcal A}\le e^{\|\phi\|_{\mathcal A}}-1$ whenever
$\phi\in\mathcal A$.
\end{definition}

\begin{lemma}[Response functional calculus]\label{lem:calculus}
For every $\phi\in\mathcal A$,
$\mathcal R\bigl(\phi(s)\bigr)\le\|\phi\|_{\mathcal A}$, where
$\mathcal R$ is the whitened response of
Proposition~\ref{prop:response}.
\end{lemma}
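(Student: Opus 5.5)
The plan is to reduce the claim to the monomial bound of Proposition~\ref{prop:response} by linearity and the triangle inequality. The only subtlety is that the weight in $\mathcal A$ is attached to powers of $s$, not $\tau s$. I will first rewrite each monomial in the normalization of Proposition~\ref{prop:response}:
\[
s^n=\tau^{-n}(\tau s)^n .
\]
Since the whitening map $\phi\mapsto|D|^{-1/2}L^{-1}(V\circ\phi(s))L^{-T}|D|^{-1/2}$ is linear in the source, this gives
\[
\mathcal R(s^n)=\tau^{-n}\mathcal R\bigl((\tau s)^n\bigr)\le\tau^{-n}t^n=w^n .
\]
This is exactly the weight in Definition~\ref{def:algebra}, and it explains the choice $w=t/\tau$.

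Next, for a finite polynomial $\phi=\sum_{n\le N}\phi_ns^n$, linearity of the whitening map and subadditivity of the spectral norm give
\[
\mathcal R(\phi)\le\sum_n|\phi_n|\,\mathcal R(s^n)\le\sum_n|\phi_n|w^n=\|\phi\|_{\mathcal A}.
\]

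For a general $\phi\in\mathcal A$, $\phi(s)$ is evaluated only at the finitely many integers $s(i,j)=d-i-j\in[-d,d]$. I must check that the power series converges there, which follows if $d\le w$. By Lemma~\ref{lem:tau} and the hypothesis $x=r\tau\le\tfrac1{64}$,
\[
w=\frac{4\sqrt{r\tau}}{\tau}=4\sqrt{r/\tau}\ge4\sqrt{r\cdot r\cdot 64}=32r>d .
\]
So the series $\sum\phi_ns^n$ converges absolutely at every $|s|\le d$. The entrywise matrix $V\circ\phi(s)$ is then the limit of the truncations $V\circ\phi_{\le N}(s)$, and the whitened matrices converge in the finite-dimensional norm. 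Passing to the limit in $\mathcal R(\phi_{\le N})\le\|\phi_{\le N}\|_{\mathcal A}\le\|\phi\|_{\mathcal A}$ gives the claim.

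The step needing the most care is this last convergence and limit exchange, specifically confirming $d<w$ from the standing hypothesis. The rest is linearity. Alternatively, the bound $|\phi(s)|$-type convergence could be replaced by noting that absolute convergence of $\sum_n|\phi_n|\,\mathcal R(s^n)$ is already ensured by $\mathcal R(s^n)\le w^n$. However, this alone does not show that the series of matrices represents $V\circ\phi(s)$ entrywise, so the comparison $|s|\le d<w$ is still needed.
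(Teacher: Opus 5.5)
Your proof is correct and follows the paper's argument essentially step for step: you rescale Proposition~\ref{prop:response} to get $\mathcal R(s^n)\le w^n$, use $|s|\le r-1<32r\le w$ for absolute convergence at the entry arguments, and conclude by linearity, the triangle inequality and a limit over truncations. One small correction to your closing aside: the whitening map $X\mapsto |D|^{-1/2}L^{-1}XL^{-T}|D|^{-1/2}$ is an invertible linear map on a finite-dimensional space, so norm convergence of the whitened series would already force entrywise convergence of $\sum_n\phi_n q^{ij}s^n$; the comparison $d<w$ is convenient but not strictly needed.
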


\begin{proof}
For the monomials,
$\mathcal R(s^n)=\mathcal R\bigl((\tau s)^n\bigr)/\tau^n
\le t^n/\tau^n=w^n$ by Proposition~\ref{prop:response} (for $n=0$,
$\mathcal R(1)=\|J\|_2=1=w^0$).  The series
$\sum_n\phi_ns^n$ converges absolutely at every entry argument, since
$|s|\le r-1<32r\le4\sqrt{r/\tau}=w$ (using $\tau\le\tfrac1{64r}$ from
$x\le\tfrac1{64}$); the entrywise source is therefore the norm-convergent
sum of its monomial parts, and subadditivity of $\mathcal R$ over
norm-convergent series gives
$\mathcal R(\phi)\le\sum_n|\phi_n|\,w^n=\|\phi\|_{\mathcal A}$.
\end{proof}

\begin{proposition}[Nonlinear majorant]\label{prop:nonlinear}
Let $r\ge2$ and $k\ge Cr^3$ with $C=10^{18}$.  Then
$x=r\tau\le\tfrac4{Cr^2}\le\tfrac1C\le\tfrac1{64}$, and
\[
\|h\|_{\mathcal A}\ \le\
\underbrace{\frac2\tau\,\frac{(80t)^3}{1-80t}}_{\text{main}}
+\underbrace{\frac{2t\cdot80^3\,\tau}{1-(80\tau)^2}}_{\text{odd }(-s)}
+\underbrace{\frac{2t^2\cdot80^4\,\tau}{1-(80\tau)^2}}_{\text{even }(-s^2)}
\ \le\ 0.1310721,
\]
and hence
$\|e^{h}-1\|_{\mathcal A}\le e^{\|h\|_{\mathcal A}}-1\le0.14005<1$.
\end{proposition}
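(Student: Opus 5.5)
The plan is to bound $\|h\|_{\mathcal A}$ term by term from the centered expansion of Lemma~\ref{lem:centered}, and then to conclude with the exponential estimate of Definition~\ref{def:algebra}.

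\emph{Step 1 (regime).} By Lemma~\ref{lem:tau}, $\tau<4/k\le 4/(Cr^3)$, so $x=r\tau\le 4/(Cr^2)\le 1/C\le 1/64$. Since $k\ge10^{18}\ge10^9$, Proposition~\ref{prop:cd} applies, and so do Lemma~\ref{lem:generator} and Lemma~\ref{lem:calculus}. Record also $t=4\sqrt{x}\le 4\cdot10^{-9}$, so $80t<1$ and $80\tau<1$ with enormous margin.

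\emph{Step 2 (termwise norms).} By the triangle inequality in $\mathcal A$,
\[
\|h\|_{\mathcal A}\le\sum_{d'\ge3}|a_{d'}|\,\|P_{d'}\|_{\mathcal A},
\qquad \|P_{d'}\|_{\mathcal A}\le w^{d'}+w^{\epsilon(d')},
\]
where $\epsilon=1$ for odd $d'$ and $\epsilon=2$ for even $d'$. Proposition~\ref{prop:cd} gives $|a_{d'}|\le 2\cdot80^{d'}\tau^{d'-1}$. With $w=t/\tau$, the monomial part therefore contributes at most
\[
\sum_{d'\ge3}2\cdot80^{d'}\tau^{d'-1}(t/\tau)^{d'}=\frac{2}{\tau}\sum_{d'\ge3}(80t)^{d'}=\frac2\tau\frac{(80t)^3}{1-80t},
\]
which is the ``main'' term.

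The odd correction terms contribute
\[
\sum_{d'\ \mathrm{odd}\ge3}2\cdot80^{d'}\tau^{d'-1}\,\frac t\tau=2t\,\tau^{-2}\sum_{\mathrm{odd}}(80\tau)^{d'}.
\]
The geometric sum starting at $d'=3$ is $(80\tau)^3/(1-(80\tau)^2)$, which produces $2t\cdot80^3\tau/(1-(80\tau)^2)$. The even terms are handled the same way with $w^2=t^2/\tau^2$, starting at $d'=4$, and give $2t^2\cdot80^4\tau/(1-(80\tau)^2)$. This reproduces the displayed three-term bound.

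\emph{Step 3 (numerics).} The main term is the only one that is not negligible, and it is the main obstacle. Since $1/\tau$ is large, it requires
\[
t^3/\tau=64\,x^{3/2}/\tau=64\,r^{3/2}\tau^{1/2}
\]
to be small. Here the cubic scale enters: $\tau<4/k\le 4/(Cr^3)$ gives $r^{3/2}\tau^{1/2}\le 2/\sqrt C=2\cdot10^{-9}$. The main term is then at most
\[
2\cdot80^3\cdot64\cdot2\cdot10^{-9}/(1-80t)\approx0.131072\cdot(1+O(10^{-7})).
\]
Note that $2\cdot 512000\cdot 128\cdot10^{-9}=0.131072$ exactly, so the stated constant $0.1310721$ leaves only a margin of about $10^{-7}$. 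To absorb the factor $1/(1-80t)$ and the two tiny side terms, which are $O(t\tau)$ and $O(t^2\tau)$, i.e.\ of order $10^{-25}$, I would bound $80t\le 320\sqrt{4/C}\cdot r^{-1}\le 6.4\cdot10^{-7}$. Even the worst case $r=2$ leaves $0.131072\cdot(1+6.4\cdot10^{-7})<0.1310721$. I would certify this scalar chain in ball arithmetic, since the margin is razor thin.

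\emph{Step 4 (exponential).} Finally, $e^{0.1310721}-1<0.14005$ follows from the Banach-algebra inequality of Definition~\ref{def:algebra}. Note that $h\in\mathcal A$ is justified by the convergence of the sums in Step 2.
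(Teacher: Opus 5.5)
Your proposal is correct and follows the paper's proof essentially step for step. You use the same triangle-inequality split of $\|h\|_{\mathcal A}$ into leading monomials and odd/even centering corrections, the same geometric sums, and the same reduction of the main term to $128\cdot80^3\sqrt{r^3\tau}\le 4\cdot320^3/\sqrt C=0.131072$, with the factor $(1-80t)^{-1}$ absorbed in the $10^{-7}$ margin. One harmless numerical slip: the side terms are not of order $10^{-25}$ but up to about $2\cdot10^{-21}$ (e.g.\ $2t\cdot80^3\tau\le2.048\cdot10^{-21}$ using $t\le4\cdot10^{-9}$, $\tau\le1/(2C)$); also, at $r\ge2$ you actually have $80t\le320/\sqrt C=3.2\cdot10^{-7}$, so the margin is even more comfortable than your $6.4\cdot10^{-7}$ estimate suggests.
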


\begin{proof}
By Lemma~\ref{lem:centered}, $h(s)=\sum_{d'\ge3}a_{d'}P_{d'}(s)$ with
$|a_{d'}|\le2\cdot80^{d'}\tau^{d'-1}$ (Proposition~\ref{prop:cd}).  The
$\mathcal A$-norms of the constituent monomials are
$\|s^{d'}\|_{\mathcal A}=w^{d'}$, $\|s\|_{\mathcal A}=w$,
$\|s^2\|_{\mathcal A}=w^2$, $w=t/\tau$.  The leading terms give
\[
\sum_{d'\ge3}|a_{d'}|\,w^{d'}
\le\frac2\tau\sum_{d'\ge3}(80t)^{d'}
=\frac2\tau\,\frac{(80t)^3}{1-80t};
\]
the odd centering corrections ($d'=2j+1$, $j\ge1$) give
\[
\sum_{d'\ge3\ \mathrm{odd}}|a_{d'}|\,w
\le2t\sum_{j\ge1}80^{2j+1}\tau^{2j-1}
=\frac{2t\cdot80^3\,\tau}{1-(80\tau)^2};
\]
and the even corrections ($d'=2j+2$, $j\ge1$) give
\[
\sum_{d'\ge4\ \mathrm{even}}|a_{d'}|\,w^{2}
\le2t^2\sum_{j\ge1}80^{2j+2}\tau^{2j-1}
=\frac{2t^2\cdot80^4\,\tau}{1-(80\tau)^2}.
\]
This proves the displayed majorant.  For its evaluation in the wedge
regime, substitute $t=4\sqrt{r\tau}$ in the main term:
\[
\frac2\tau(80t)^3
=2\cdot80^3\cdot64\,\frac{(r\tau)^{3/2}}{\tau}
=128\cdot80^3\sqrt{r^3\tau}
\ \le\ 128\cdot80^3\cdot\frac2{\sqrt C}
=\frac{4\cdot320^3}{\sqrt C},
\]
using $r^3\tau\le4/C$ (from $\tau\le4/k\le4/(Cr^3)$), while
$80t\le320/\sqrt C<1$ controls the geometric factor
$(1-80t)^{-1}\le(1-320/\sqrt C)^{-1}$.  The two correction terms are
increasing in $t$ and $\tau$ separately and are evaluated at the
endpoints $t\le4/\sqrt C$ (from $r\tau\le1/C$) and
$\tau\le\tfrac1{2C}$ (from $\tau\le4/(Cr^3)$ at $r\ge2$).  The certified
enclosures are
\[
\text{main}\le0.131072041943053\pm5\cdot10^{-16},\quad
\text{odd}\le2.048\cdot10^{-21},\quad
\text{even}\le6.554\cdot10^{-28},
\]
with certified total $\|h\|_{\mathcal A}\le0.1310721$ and
$e^{\|h\|}-1\le0.140049909764285\pm5\cdot10^{-16}<1$
(\texttt{certify\_q\_nonlinear\_wedge}).
\end{proof}

\section{Assembly: inertia preservation and the sign of the minor}
\label{sec:assembly}

\begin{lemma}[Inertia preservation]\label{lem:inertia}
Let $M=SJS^{T}$ with $S$ invertible and $J$ a signature matrix, and let
$T=M+E$ be symmetric with
$\|S^{-1}ES^{-T}\|_2<1$.  Then $T$ has the same inertia as $J$.
\end{lemma}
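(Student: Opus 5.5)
The plan is to reduce to a perturbation of $J$ by congruence, and then use a continuity argument in which no eigenvalue can cross zero. Since $S$ is invertible, Sylvester's law of inertia says that $T$ and
\[
S^{-1}TS^{-T}=J+F,\qquad F:=S^{-1}ES^{-T},
\]
have the same inertia. The matrix $F$ is symmetric because $T$ and $M$ are symmetric, hence so is $E$. It therefore suffices to show that $J+F$ has the same inertia as $J$ whenever $F$ is symmetric with $\|F\|_2<1$.

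For this, consider the path $J+\theta F$ for $\theta\in[0,1]$. Every matrix on the path is invertible, and we get this directly from the fact that $J$ is orthogonal with $J^2=I$:
\[
J+\theta F=J(I+\theta JF),\qquad \|\theta JF\|_2\le\|F\|_2<1,
\]
so $I+\theta JF$ is invertible by the Neumann series.

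Equivalently, for any unit vector $v$,
\[
\|(J+\theta F)v\|\ge\|Jv\|-\theta\|Fv\|\ge1-\|F\|_2>0.
\]
Thus the smallest singular value, which for a symmetric matrix is the smallest $|\lambda|$, stays at least $1-\|F\|_2$ along the whole path.

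The eigenvalues of $J+\theta F$, listed in increasing order, depend continuously on $\theta$; this follows from Weyl's inequality $|\lambda_i(A)-\lambda_i(B)|\le\|A-B\|_2$. Since none of them meets $0$, the number of positive eigenvalues and the number of negative eigenvalues are constant in $\theta$. At $\theta=0$ these numbers are those of $J$. So $J+F$ has the inertia of $J$, and therefore so does $T$.

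An alternative avoiding continuity is a direct Weyl argument: $|\lambda_i(J+F)-\lambda_i(J)|\le\|F\|_2<1$, and every eigenvalue of $J$ is $\pm1$, so each eigenvalue keeps its sign. This is the one-line route I would actually write.

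There is no real obstacle here. The only points needing care are that the whitened perturbation $S^{-1}ES^{-T}$ is symmetric, so that eigenvalue perturbation theory in the form of Weyl's inequality applies, and that the norm condition is strict, which excludes zero eigenvalues. Both follow immediately from the hypotheses.
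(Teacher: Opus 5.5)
Your proof is correct and follows the paper's argument: you congruence-reduce by $S^{-1}$ to $J+F$ with $F=S^{-1}ES^{-T}$ symmetric, get nonsingularity of $J+\theta F$ from the orthogonality of $J$ and $\|F\|_2<1$, and use continuity of the eigenvalues along the path. Your one-line Weyl variant, $|\lambda_i(J+F)-\lambda_i(J)|\le\|F\|_2<1$ with $\lambda_i(J)=\pm1$, is also valid and slightly more direct, since it needs no homotopy at all.
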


\begin{proof}
$T=S(J+F)S^{T}$ with $F=S^{-1}ES^{-T}$, $\|F\|<1$.  For
$\theta\in[0,1]$ the matrix $J+\theta F$ is nonsingular, since $J$ is symmetric orthogonal, so all its singular values equal
$1$, while $\|\theta F\|<1$.  The eigenvalues
of the continuous symmetric family $J+\theta F$ therefore never cross
zero, so the inertia of $J+F$ equals that of $J$; Sylvester's law
transfers this to $T$.
\end{proof}

\begin{lemma}[Sign arithmetic]\label{lem:signs}
Let $Q$ be a symmetric $r\times r$ matrix with the inertia of
$J=\operatorname{diag}\bigl((-1)^m\bigr)_{m=0}^{r-1}$, i.e.\
$(\lceil r/2\rceil\ \text{positive},\ \lfloor r/2\rfloor\ \text{negative})$
eigenvalues, and let $T$ be obtained from $Q$ by reversing the order of
the columns.  Then
\[
\det T=(-1)^{\binom r2}\det Q>0 .
\]
\end{lemma}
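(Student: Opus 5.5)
The plan is to compute both signs directly, so the argument splits into a column-reversal factor and a determinant-from-inertia factor.

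\emph{Reversal factor.} Let $P$ be the $r\times r$ reversal permutation matrix, $P_{ij}=[i+j=r-1]$, so that $T=QP$ and $\det T=\det Q\cdot\det P$. The permutation $i\mapsto r-1-i$ is a product of $\lfloor r/2\rfloor$ disjoint transpositions. Hence $\det P=(-1)^{\lfloor r/2\rfloor}$. I will then check by parity of $r$ modulo $4$ that $\lfloor r/2\rfloor\equiv\binom r2\pmod 2$: both sides are even for $r\equiv0,1$ and odd for $r\equiv2,3\pmod 4$. This gives $\det T=(-1)^{\binom r2}\det Q$.

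\emph{Determinant from inertia.} Since $Q$ is symmetric with the inertia of $J$, it is nonsingular, and $\det Q$ is the product of its real eigenvalues. Thus $\operatorname{sgn}\det Q=(-1)^{\#\text{negative}}=(-1)^{\lfloor r/2\rfloor}$. One could equally write $Q=SJS^{T}$ by Sylvester's law, which gives $\det Q=(\det S)^2\det J$ with $\det J=(-1)^{\lfloor r/2\rfloor}$.

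\emph{Combining.} We get $\operatorname{sgn}\det T=(-1)^{\binom r2}(-1)^{\lfloor r/2\rfloor}=(-1)^{2\lfloor r/2\rfloor}=+1$, using the congruence from the first step. Hence $\det T>0$.

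There is no real obstacle here. The only point needing care is the parity identity $\lfloor r/2\rfloor\equiv\binom r2\pmod 2$, which I will verify explicitly in the four residue classes of $r$ modulo $4$.
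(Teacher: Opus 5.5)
Your proposal is correct and follows the paper's proof. Both arguments read off the sign of $\det Q$ from its $\lfloor r/2\rfloor$ negative eigenvalues, account for the column reversal by the flip permutation, and check $\lfloor r/2\rfloor\equiv\binom r2\pmod 2$ in the residue classes mod $4$. The one cosmetic difference is how $\det P$ is computed. You count transpositions to get $\det P=(-1)^{\lfloor r/2\rfloor}$, so positivity is immediate and the parity check only serves to match the stated factor $(-1)^{\binom r2}$. The paper counts inversions to get $\det P=(-1)^{\binom r2}$ and needs the parity check to obtain positivity.
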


\begin{proof}
$\operatorname{sgn}\det Q=(-1)^{\lfloor r/2\rfloor}$, and the column
reversal multiplies the determinant by the determinant
$(-1)^{\binom r2}$ of the flip permutation.  It remains to check
$\lfloor r/2\rfloor\equiv\binom r2\pmod2$: writing $r=4l+j$,
$j\in\{0,1,2,3\}$, both sides are congruent to $0,0,1,1$ respectively.
\end{proof}

\begin{proof}[Proof of Theorem~\ref{thm:main}]
Fix $r\ge2$ and $k\ge10^{18}r^3$; note $k\ge8\cdot10^{18}>10^9$, so all
results of \S\S\ref{sec:tau}--\ref{sec:gateC} apply.  Let
$T^{\xi}_{ij}=a_{k+j-i}$ ($i,j=0,\dots,r-1$) be the minor block, so
$D_{r,k}=\det T^{\xi}$, and let
$Q^{\xi}_{ij}=a_{k+s}$, $s=(r-1)-i-j$, be its column reversal (a
symmetric reversed-Hankel block; all indices $k+s\ge k-(r-1)>0$).

\emph{Normalization.}  By \eqref{eq:h},
$a_{k+s}=a_k\rho^{\,s}c_se^{h_s}$.  The scalar $a_k\rho^{\,r-1}>0$ and
the positive diagonal congruence
$\operatorname{diag}(\rho^{-i})\,Q^{\xi}\operatorname{diag}(\rho^{-j})$
neither change the sign of $\det Q^{\xi}$ nor the inertia; we may
therefore replace the entries by $c_se^{h_s}$.

\emph{Model factorization.}  The binomial identity
$\binom{s}{2}-\binom{r-1}{2}
=\bigl[\binom{i+1}2-(r-1)i\bigr]+\bigl[\binom{j+1}2-(r-1)j\bigr]+ij$
(with $s=(r-1)-i-j$) gives the exact diagonal congruence
\[
[c_s]_{ij}=c_{r-1}\;A\,V\,A,
\qquad A=\operatorname{diag}\bigl(q^{\binom{i+1}2-(r-1)i}\bigr)\ \text{positive},
\quad V_{ij}=q^{ij},
\]
and $V=SJS^{T}$ with $S=L|D|^{1/2}$ by Lemma~\ref{lem:ldl}.  Thus the
model block $M=[c_s]$ satisfies $M=\widetilde S\,J\,\widetilde S^{T}$
with $\widetilde S=\sqrt{c_{r-1}}\,A\,L\,|D|^{1/2}$.

\emph{Perturbation.}  The true (normalized) block is
$M+E$ with $E=c_{r-1}A\,\bigl(V\circ(e^{h_s}-1)\bigr)A$, so
\[
\widetilde S^{-1}E\,\widetilde S^{-T}
=|D|^{-1/2}L^{-1}\bigl(V\circ(e^{h_s}-1)\bigr)L^{-T}|D|^{-1/2},
\]
the scalars $c_{r-1}$ and diagonals $A$ cancelling exactly.  By
Lemma~\ref{lem:calculus} and Proposition~\ref{prop:nonlinear},
\[
\bigl\|\widetilde S^{-1}E\widetilde S^{-T}\bigr\|_2
=\mathcal R\bigl(e^{h}-1\bigr)
\le\|e^{h}-1\|_{\mathcal A}\le0.14005<1 .
\]
(The hypothesis $x=r\tau\le\tfrac1{64}$ of
Proposition~\ref{prop:response} holds since $x\le1/C$.)

\emph{Conclusion.}  Lemma~\ref{lem:inertia} gives $Q^{\xi}$ (after the
positive normalizations) the inertia of $J$, and Lemma~\ref{lem:signs}
yields $\det T^{\xi}>0$, i.e.\ $D_{r,k}>0$.
\end{proof}

\section{Certification and reproducibility}\label{sec:certification}

Every constant inequality used above is certified with Arb ball
arithmetic \cite{Johansson} at $256$--$4096$ bits with directed
rounding: a certificate function either returns rigorous enclosures
satisfying the claimed inequality or raises an error.  The exact algebra
of \S\ref{sec:gateB} is verified over $\mathbb Q$ (\texttt{Fraction}),
not in floating point.  The ancillary directory contains the four
modules and a $36$-test regression suite (\texttt{run\_tests.sh}).

\begin{table}[ht]
\centering\small
\begin{tabular}{lll}
\toprule
Certificate (ancillary \texttt{src/}) & Statement & Key enclosure\\
\midrule
\texttt{certify\_phi\_n1\_tube} & Lemma~\ref{lem:tube} & tail ratio $4.122\cdot10^{-18}$\\
\texttt{certify\_principal\_saddle\_continuation} & Lemma~\ref{lem:rouche} & $0.06$ vs $0.05675$\\
\texttt{certify\_nested\_saddle\_geometry} & Lemma~\ref{lem:phase} & radii $0.015,\,0.025,\,0.04$\\
\texttt{certify\_saddle\_domains} & drift + inclusions & $0.06$ vs $0.0568$;\\
& (Lem.~\ref{lem:rouche},~\ref{lem:phase}) & $\tfrac3{200}+\tfrac1{40}\le\tfrac1{25}$\\
\texttt{certify\_full\_kernel\_saddle\_continuation} & Lemma~\ref{lem:fullkernel} & pert.\ $6.53\cdot10^{-15}$\\
\texttt{certify\_horizontal\_phase\_concavity} & Lemma~\ref{lem:concavity} & angles $0.337,\,0.653$\\
\texttt{certify\_phase\_derivative\_bounds} & Lemma~\ref{lem:phase} & $\Rea A>6.730$, $|A|<14.185$\\
& & $|\Psi'''|\sigma^3$-ratio $<52.381$\\
\texttt{certify\_local\_gaussian\_lower\_bound} & Lemma~\ref{lem:zerofree} & central $>0.6606$\\
\texttt{certify\_contour\_completion} & Lemma~\ref{lem:zerofree} & tails $<10^{-4}$, $<10^{-100}$\\
\texttt{certify\_analytic\_degree\_tail} & Prop.~\ref{prop:cd} & total ratio $<0.500073$\\
\texttt{q\_pascal\_dilation\_step}/\texttt{\_semigroup} & Lemma~\ref{lem:bidiag} & exact over $\mathbb Q$\\
\texttt{certify\_q\_response\_regime} & Lemma~\ref{lem:generator} & generator $\le4\sqrt x$\\
\texttt{certify\_q\_nonlinear\_wedge} & Prop.~\ref{prop:nonlinear} & $\|h\|\le0.1310721$,\\
& & $e^{\|h\|}-1\le0.14005$\\
\bottomrule
\end{tabular}
\vspace{4pt}
\caption{Lemma-to-certificate map.  All values are conservative upper (resp.\ lower) roundings of
rigorous Arb enclosures; the exact balls are printed by the ancillary
certificates.}
\label{tab:certificates}
\end{table}

\section{Concluding remarks}\label{sec:remarks}

\begin{remark}[Relation to the fixed-order asymptotics]
For fixed $r$, Katkova's method \cite{Katkova} gives
$D_{r,k}>0$ for $k\ge N(r)$ with a non-explicit $N(r)$.  The exact
fixed-order saddle expansion identifies the first correction
of relative size $\asymp r^3/k$ to the normalized local minor,
showing that $r^3/k$ is the natural small parameter; the wedge constant
$10^{18}$ is what our deliberately coarse uniform bounds pay for
explicitness.  We expect the true uniform threshold to be far smaller.
\end{remark}

\begin{remark}[What the wedge does not do]
The theorem leaves the region $k<10^{18}r^3$ untouched, and no method in
this paper extends to it: the comparison model degrades as $k/r^3$
decreases; positivity of \emph{all} minors is equivalent to RH, and the
minors with $k=O(r)$ are precisely those untouched by any tail method,
including ours.  The wedge should be read as a quantitative
boundary of what kernel-analytic, arithmetic-free methods deliver:
positivity in the far tail with explicit constants, uniformly in the
order --- and nothing inside the critical cone.
\end{remark}

\begin{remark}[Independence from zero verification]
No step of the proof uses numerically verified zeros of $\zeta$; the
only external analytic input is the Tur\'an theorem of \cite{CNV} (used
solely for the lower bound $\tau_k>\tfrac1{2k}$).  The wedge is in this
sense complementary to the sector-strip route, which certifies
nonnegativity for all $k$ when $r\le9.4\cdot10^{12}$ but consumes the
verified height of \cite{PT}.
\end{remark}

\subsection*{Acknowledgements}
The computations use \texttt{python-flint} (Arb \cite{Johansson}) and
\texttt{mpmath}.


\begin{thebibliography}{10}

\bibitem{ASW}
M.~Aissen, I.~J. Schoenberg, A.~M. Whitney,
\emph{On the generating functions of totally positive sequences~I},
J.~Analyse Math.\ \textbf{2} (1952), 93--103.

\bibitem{Edrei}
A.~Edrei,
\emph{On the generating functions of totally positive sequences~II},
J.~Analyse Math.\ \textbf{2} (1952), 104--109.

\bibitem{CNV}
G.~Csordas, T.~S. Norfolk, R.~S. Varga,
\emph{The Riemann hypothesis and the Tur\'an inequalities},
Trans.\ Amer.\ Math.\ Soc.\ \textbf{296} (1986), 521--541.

\bibitem{Katkova}
O.~M. Katkova,
\emph{Multiple positivity and the Riemann zeta-function},
arXiv:math/0505174.

\bibitem{Schoenberg}
I.~J. Schoenberg,
\emph{A note on multiply positive sequences and the Descartes rule of
signs},
Rend.\ Circ.\ Mat.\ Palermo (2) \textbf{4} (1955), 123--131.

\bibitem{PT}
D.~J. Platt, T.~S. Trudgian,
\emph{The Riemann hypothesis is true up to $3\cdot10^{12}$},
Bull.\ London Math.\ Soc.\ \textbf{53} (2021), 792--797;
arXiv:2004.09765.

\bibitem{Johansson}
F.~Johansson,
\emph{Arb: efficient arbitrary-precision midpoint-radius interval
arithmetic}, IEEE Trans.\ Computers \textbf{66} (2017), 1281--1292.

\bibitem{DLMF}
NIST Digital Library of Mathematical Functions, \S2.4(iv),
\url{https://dlmf.nist.gov/2.4}.

\bibitem{GORZ}
M.~Griffin, K.~Ono, L.~Rolen, D.~Zagier,
\emph{Jensen polynomials for the Riemann zeta function and other
sequences}, Proc.\ Natl.\ Acad.\ Sci.\ USA \textbf{116} (2019),
11103--11110.

\bibitem{RT}
B.~Rodgers, T.~Tao,
\emph{The de Bruijn--Newman constant is non-negative},
Forum Math.\ Pi \textbf{8} (2020), e6; arXiv:1801.05914.

\bibitem{PF5}
W.~Micha{\l}owski,
\emph{On the P\'olya frequency order of the de Bruijn--Newman kernel:
certified failure at order five and the Toeplitz threshold phenomenon},
arXiv:2602.20313.

\end{thebibliography}
\end{document}